\documentclass[12pt,reqno]{amsart}
\usepackage{hyperref}
\usepackage{amsmath,amssymb, amsthm}
\usepackage{caption}
\usepackage{amssymb,amsmath,euscript, enumerate,tikz}
\usepackage[margin=1in]{geometry}
\usepackage{graphicx}
\usepackage{float}
\usepackage{pgf,tikz}
\usepackage{mathrsfs}
\usepackage{subfigure}
\usepackage{mathrsfs}
\usepackage{mathtools}

\newcommand \iv{\operatorname{iv}}

\newcommand \cdeg{\operatorname{cdeg}}

\newcommand \hs{\operatorname{HS}}
\newcommand \hf{\operatorname{HF}}
\newcommand \hp{\operatorname{HP}}

\newcommand \h{\operatorname{ht}}

\newcommand \K{\mathbb{K}}

\DeclareMathOperator{\fcdeg}{fcdeg}
\DeclareMathOperator{\height}{ht}

\theoremstyle{plain}
\newtheorem{theorem}{Theorem}[section]
\newtheorem{lemma}[theorem]{Lemma}

\newtheorem{proposition}[theorem]{Proposition}
\newtheorem{corollary}[theorem]{Corollary}

\theoremstyle{definition}

\newtheorem{remark}[theorem]{Remark}
\newtheorem{definition}[theorem]{Definition}
\newtheorem{example}[theorem]{Example}

\begin{document}

\title[On the Hilbert series and multiplicity of Generalized Binomial Edge Ideals]{On the Hilbert series and multiplicity of Generalized Binomial Edge Ideals}

 \author[Paramhans Kushwaha]{Paramhans Kushwaha}
 \email{2022rma2004@iitjammu.ac.in\\paramhans844@gmail.com}
 \address{Indian Institute of Technology, Jammu}
\address{NH-44, PO Nagrota, Jagti, Jammu and Kashmir 181221}
 \subjclass[2020]{13C70 05E40, 13A02}

\keywords{ Generalized binomial edge ideals; Hilbert series; Multiplicity; Join of graphs}
	\begin{abstract}
       In this article, we compute the Hilbert series of the generalized binomial edge ideal associated with the join of two graphs. Consequently, we compute the Hilbert series of complete \(r\)-partite graphs. Further, we show that if a vertex satisfies a certain degree condition, then some Hilbert coefficients remain unchanged upon its removal. This provides a reduction technique for computing them. As an application, we compute the dimension and multiplicity of several classes of graphs. 
	\end{abstract}
	
	\maketitle
  \section{introduction}
  Let $X=(x_{ij})_{m\times n}$ be a matrix of variables, and $S=\K[x_{ij}:(i,j)\in [m]\times [n]]$ be a polynomial ring over a field $\K$. The ideal generated by all $t$-minors of $X$, for $t\leq \min\{m,n\}$, is called the \emph{determinantal ideal} and is denoted by $I_t(X)$. These ideals are extensively studied in the literature due to their significance in combinatorics, representation theory, and invariant theory (see \cite{Grobner-Bases-and-Determinantal-ideals-Bruns-Conca}, \cite{Determinantal-rings-Bruns-Vetter}). Subsequently, researchers shifted their attention to ideals generated by arbitrary sets of minors of a generic matrix, as these ideals arise naturally in many branches of mathematics.

The binomial edge ideal of a pair of graphs, introduced in \cite{The-binomial-edge-ideal-of-pair-of-graphs-Ene-Herzog-Hibi-Qureshi}, is a generalization of the determinantal ideal $I_2(X)$ of an $m\times n$ generic matrix $X$. Let $G_1$ and $G_2$ be simple graphs on vertex sets $[m]$ and $[n]$, respectively. Let $\K[X]$ be  the polynomial ring over a field $\K$, where $X=(x_{ij})_{m\times n}$ is a matrix of variables. For each pair $(e,f)\in E(G_1)\times E(G_2)$ such that $e=\{i,j\}$ and $f=\{k,l\}$ with $i<j$ and $k<l$, assign a $2$-minor $p_{(e,f)}=[i~j|k~l]=x_{ik}x_{jl}-x_{il}x_{jk}$. The \emph{binomial edge ideal of the pair $(G_1,G_2)$}, denoted by $J_{G_1,G_2}$, is an ideal in $S$ defined as follows:
$$J_{G_1,G_2}=\langle p_{(e,f)}:e\in E(G_1),f\in E(G_2)\rangle.$$
This framework simultaneously generalizes the ideal of adjacent minors and generalized binomial edge ideals. If $(G_1,G_2)=(K_m,G)$, where $K_m$ is a complete graph on $m$ vertices, then $J_{K_m,G}$ or simply $J_{m,G}$ is the \emph{generalized binomial edge ideal} associated to $G$, introduced in \cite{GBI-Rauh-2013}. For $m=2$, the ideal $J_{K_2,G}$ is the classical binomial edge ideal $J_G$ of $G$ appeared independently in \cite{BEIandConditionalDependance,GraphsandIdealsGeneratedbysome2-minor-Ohtani-Masahiro}. The binomial edge ideal and the generalized binomial edge ideal exhibit fundamentally different behavior when \(m\geq 3\). For instance, \(J_{m,G}\) is Cohen--Macaulay if and only if \(G\) is a complete graph \cite{CM-GBEI-Amata-Crupi-Rinaldo}. In contrast, Cohen--Macaulay binomial edge ideals are not restricted to complete graphs. This distinction highlights the need for a separate study of generalized binomial edge ideals.

The binomial and generalized binomial edge ideals are important in algebraic statistics, particularly in the study of conditional independence ideals. Researchers have been trying to study the algebraic properties of these ideals in terms of the combinatorial properties of the underlying graph(s). For example, the associated primes of binomial and generalized binomial edge ideals admit a combinatorial description via the cut point property of the underlying graph. Significant advancements have been achieved so far in the study of binomial edge ideals, see (\cite{CM-BEI-Ene-H-H,OntheextremalBettinumofBEIofblockgraphds,Partial-Betti-splittings-applications-to-BEI-Jayanthan-Shivakumar-Van-tuyl,Some-CM-and-unmixed-BEI-Kiani-Madani-2015,BEI-of--generalized-block-graphs-Arvind-Kumar,HS-of-BEI-Arvind-Sarkar-2019,KK25,Krull-dim-and-reg-BEI-of-Block-graphs-Rinaldo,A-proof-for-a-conjecture-reg-BEI-Malayeri-Madani-Kiani}) and references therein. The depth of generalized binomial edge ideals is studied in limited works \cite{On-the-Depth-of-GBEI-Anuvinda-Mehta-Saha,shen-Zhu-2023-GBEI-of-complete-r-partite-graphs}. The Castelnuovo-Mumford regularity for binomial and generalized binomial edge ideals has been extensively studied. We refer the reader to \cite{On-a-reg-conjecture-of-GBEI-Anuvinda-Mehta-Saha,Castelnuovo-Mumford-reg-of-GBEI-of-graphs-Kiani-Madani-Zhu} and a recent survey \cite{jayanthan-Kumar-2025generalizedbinomialedgeideals}, and references therein. Despite the growing interest in generalized binomial edge ideals (\cite{Arithmetical-rank-and-cohomological-dimension-of-GBEI-Katsabekis,Powers-of-GBEI-of-path-graphs-shen-Zhu,GBEI-of-bipartite-graphs}), their Hilbert series remains largely unexplored.

The objective of this paper is to find the Hilbert series of the generalized binomial edge ideals of graphs using the Hilbert series of certain subgraphs. Let $H$ and $H'$ be two graphs with the vertex set $[p]$ and $[q]$, respectively. The join of $H$ and $H'$, denoted by $H*H'$ is the graph with vertex set $[p]\sqcup [q]$ and the edge set $E(H*H')=E(H)\cup E(H')\cup \{\{i,j\}:(i,j)\in [p]\times[q]\}$. Our goal is to compute the Hilbert series of the join of $H$ and $H'$ in terms of the Hilbert series of $H$ and $H'$ as follows:

 \medskip
    \noindent
    \textbf{Theorem~~\ref{thm: hs of join of graphs}.} 
 Let $H$ and $H'$ be two graphs on vertex sets $[p]$ and $[q]$, respectively. Let $G=H*H'$ be the join of $H$ and $H'$. Then 
    \begin{align*}
        \hs(S/J_{m,H*H'},t)=&\hs(S_H/J_{m,H},t)+\hs(S_{H'}/J_{m,H'},t)\\
        & +\frac{\sum_{i\geq 0}\binom{m-1}{i}\left[\binom{p+q-1}{i}-\binom{p-1}{i}(1-t)^q-\binom{q-1}{i}(1-t)^p\right]t^i}{(1-t)^{m+p+q-1}}.
    \end{align*}

The Hilbert polynomial is a fundamental tool in the study of finitely generated graded algebras, as it provides valuable information about their algebraic structure. The coefficients of the Hilbert polynomial, when written in the standard binomial basis, are called the Hilbert coefficients. Among these, the leading coefficient $e_0$, known as the multiplicity, is one of the most significant numerical invariants because of its close connection with the geometry of algebraic varieties. Although multiplicity has been widely studied, its computation remains difficult for general classes of ideals, and the determination of the higher Hilbert coefficients is even more challenging. While binomial edge ideals have been investigated extensively, their generalization, namely generalized binomial edge ideals, has received comparatively less attention. At present, the study of invariants such as the Hilbert series and multiplicity for generalized binomial edge ideals is still in its early stages.

We provide certain graph-theoretic conditions on a vertex under which Hilbert coefficients remain unchanged upon its removal. In fact, we prove:

\medskip
    \noindent
    \textbf{Theorem~~\ref{thm: fcdeg condition to remove vertex keeping Hilbert coefficients same}.} 
 Let $G$ a graph and $v\in \iv(G)$ such that $\fcdeg(v)\geq \lceil\frac{i+2}{m-1}\rceil+1$ for some $i\geq 0$. Then $$e_j(S/J_{m,G})=e_j(S/J_{m,G\setminus v}),$$ for all $0\leq j\leq i.$

As a consequence, we compute the dimension and multiplicity of several classes of graphs. 

The article is organized as follows: In Section \ref{sec: Preliminaries}, we recall the necessary algebraic and graph-theoretic concepts relevant to this work. Section~\ref{sec: HS of join of graphs} is devoted to computing the Hilbert series of the join of two graphs in terms of the Hilbert series of the individual graphs. 
In Section \ref{sec: multiplicity}, we show that some of the Hilbert coefficients remain unchanged after removing certain vertices. Section \ref{sec: Applications} is devoted to computing the dimension and multiplicity of multifan and wheel graphs.
\section{Preliminaries}\label{sec: Preliminaries}
In this section, we recall basic definitions and results from commutative algebra and combinatorics that will be used throughout the paper.
\subsection{Basics on Graph Theory}
A graph $G$ is an ordered pair $(V(G),E(G))$, where $V(G)$ is the vertex set of $G$ and $E(G)$ is the edge set of $G$, consisting of $2$-element subsets of $V(G)$. A graph $G$ is called \emph{simple} if it has no multiple edges or loops. A graph $H$ is called a \emph{subgraph} of $G$ if $V(H)\subset V(G)$ and $E(H)\subset E(G)$. A subgraph $H$ is an \emph{induced subgraph} of $G$ if whenever $\{i,j\}\in E(G)$ with $i,j\in V(H)$ implies $\{i,j\}\in E(H)$. A graph $G$ is called a \emph{clique} or a \emph{complete} graph if there is an edge between every distinct pair of vertices of $G$. A complete graph on $m$ vertices is denoted by $K_m$. We say an induced subgraph $H$ is a \emph{clique of $G$} if $H$ is a complete graph. A \emph{maximal clique} of $G$ is a clique of $G$ that is not contained in any other clique of $G$. For $v\in V(G)$, denote $\cdeg_G(v)$ the number of maximal cliques of $G$ containing $v$, called the \emph{clique degree} of $v$. A vertex of clique degree one is called a \emph{free vertex}; otherwise, it is called an \emph{internal vertex}. The set of all internal vertices of $G$ is denoted by $\iv(G)$. For $v\in V(G)$, denote $N_G(v)$ the collection of adjacent vertices to $v$ in $G$, and $N_G[v]=N_G(v)\cup \{v\}$. Denote $G_v$ the graph on $V(G)$ obtained by adding necessary edges so that the induced subgraph on $N[v]$ is complete. The graph $G\setminus v$ is a graph on the vertex set $V(G)\setminus\{v\}$ with $E(G\setminus v)=E(G)\setminus{\{\{u,v\}: u\in N(v)\}}$. For any graph-theoretic terminology not explained here, we refer the reader to look at the standard text of graph theory \cite{BondyMurty2008}.
\subsection{Algebraic Basics}
Let \(R=\mathbb{K}[z_1,\ldots,z_n]\) be a polynomial ring over a field \(\mathbb{K}\), where \(\deg(z_i)=1\) for all \(1\le i\le n\), and let \(I\subseteq R\) be a homogeneous ideal. We make a convention that $\mathbb {N} $ contains $0$. The \emph{Hilbert function} of the graded ring $R/I$ is the function $\hf(R/I, \rule{0.2cm}{0.15mm}):\mathbb{N}\longrightarrow \mathbb{N}$ with 
    \[
    \hf(R/I, j)=\dim_{\K}(R/I)_j
    \]
    for all $j\in \mathbb N$, where $(R/I)_j$ denotes the $j$-th graded component of $R/I$. The generating function of this numerical function is called the \emph{Hilbert Series} of $R/I$, and is given as follows
    \[
    \hs(R/I,t)=\sum_{j\in \mathbb{N}}\hf(R/I,j)\ t^j.
    \]
    The Hilbert function is additive on short exact sequences, so is the Hilbert series. A classical theorem of Hilbert-Serre provides a reduced form of the Hilbert series
    \[
    \hs(R/I,t)=\frac{Q(t)}{(1-t)^d},
    \]
    where $Q(t)\in \mathbb{Z}[t]$ is a polynomial with $Q(1)\geq 1$ and $d=\dim(R/I)$. The Hilbert polynomial of $R/I$, denoted by $\hp(R/I, X)$ is the unique polynomial with rational coefficients such that $\hf(R/I,j)=\hp(R/I, j)$ for $j\gg 0$.
    Moreover, the coefficients of the Hilbert polynomial can be recovered from the numerator $Q(t)$. More precisely, if $\hp(R/I, X)$ denotes the Hilbert polynomial of $R/I$, and is written in the form
    \[
    \hp(R/I, X)= \sum_{i=1}^{d-1}(-1)^{d-1-i}e_{d-1-i} \left( \begin{array}{c} X+i \\ i \end{array} \right),
    \]
    then the coefficients $e_i \text{ for }0\leq i\leq d-1$ can be obtained as  follows: $e_i=\frac{Q^{(i)}(1)}{i!}$, where $Q^{(i)}(t)$ denotes the $i$-th derivative of the polynomial $Q(t)$. The number $e_i$ is called the 
    \emph{$i$-th Hilbert coefficient} of $R/I$, and is denoted by $e_i(R/I)$ for $0\leq i\leq d-1$. In particular, when $\dim(R/I)>0$, the coefficient $e_0(R/I)$ is the well-known \emph{multiplicity} of $R/I$. 

Let $I$ be an ideal in the polynomial ring $S$, and $I=\cap_j I_j$ be the irredundant primary decomposition of $I$. The following shows that the multiplicity of $S/I$ is the sum of multiplicities of $S/I_i$ for which $\dim(S/I)=\dim(S/I_i)$. 
\begin{lemma}\label{lemma: multiplicity of arbitrary ideal}
    Let $I=(\cap_{i=1}^rI_i)\cap(\cap_{j=1}^sJ_j)$ be the irredundant primary decomposition, where $\dim(S/I)=\dim(S/I_i)$ for all $1\leq i\leq r$ and $\dim(S/I)>\dim(S/J_j)$ for all $1\leq j\leq s$. Then $$e_0(S/I)=\sum_{i=1}^r e_0(S/I_i).$$ 
\end{lemma}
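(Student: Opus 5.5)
This is the associativity formula for multiplicity. The plan is to reduce it to additivity of $e_0$ on short exact sequences together with induction on $r$.

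First I will separate off the lower-dimensional components. Set $I'=\cap_{i=1}^r I_i$ and $J=\cap_{j=1}^s J_j$, so $I=I'\cap J$. There is an exact sequence
\[
0\to S/I\to S/I'\oplus S/J\to S/(I'+J)\to 0.
\]
Hilbert series are additive on this sequence.

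Next I compare dimensions:
\begin{itemize}
\item $\dim S/J<d$, where $d=\dim S/I$, since it is a maximum over the $J_j$.
\item $\dim S/(I'+J)\le \dim S/J<d$.
\item $\dim S/I'=d$.
\end{itemize}
Write each Hilbert series as $Q(t)/(1-t)^d$ with a common denominator. A module of dimension $<d$ contributes a numerator divisible by $(1-t)$, so it vanishes at $t=1$. Evaluating at $t=1$ therefore gives $e_0(S/I)=e_0(S/I')$. If $s=0$ this step is vacuous.

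Then I handle the top-dimensional components by induction on $r$. The case $r=1$ is trivial. For the inductive step, let $A=\cap_{i=1}^{r-1}I_i$ and use
\[
0\to S/(A\cap I_r)\to S/A\oplus S/I_r\to S/(A+I_r)\to 0.
\]
This gives $e_0(S/I')=e_0(S/A)+e_0(S/I_r)$, and the induction hypothesis applied to $A$ finishes, provided $\dim S/(A+I_r)<d$.

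This dimension claim is the main obstacle, and it uses irredundancy. Let $P_i=\sqrt{I_i}$. Each $P_i$ is a minimal prime of dimension $d$, and the $P_i$ are pairwise distinct because the decomposition is irredundant. Take any prime $Q\supseteq A+I_r$. Then $Q\supseteq P_r$, and $Q\supseteq P_i$ for some $i<r$ since $Q\supseteq A$. If $\dim S/Q=d$, then $Q=P_r=P_i$, a contradiction. Hence $\dim S/(A+I_r)<d$.

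The only other fact needed is that each $\dim S/I_i$, and so $\dim S/A$, equals $d$, so that $d$ is the correct common exponent.
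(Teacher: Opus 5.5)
Your proposal is correct and is essentially the paper's proof. The paper likewise first discards the lower-dimensional components, by citing \cite[Lemma 3.1]{KKP-Hilbert-coefficients-regularity-BEI}, where you verify that step directly with the exact sequence $0\to S/I\to S/I'\oplus S/J\to S/(I'+J)\to 0$ and evaluation of numerators at $t=1$; it then inducts on $r$ via the exact sequence for $L\cap I_r$, proving $\dim S/(L+I_r)<d$ by your argument phrased with heights (a top-dimensional prime over $L+I_r$ would equal both $P_r$ and some $P_i$ with $i<r$, contradicting distinct radicals).
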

\begin{proof}
Let $I=(\cap_{i=1}^rI_i)\cap J$, where $J=\cap_{j=1}^sJ_j$. Under the given hypothesis, we have $\h(J)>\h(\cap_{i=1}^rI_i)$. Therefore, it follows from \cite[Lemma 3.1]{KKP-Hilbert-coefficients-regularity-BEI} that $e_0(S/I)=e_0(S/{\cap_{i=1}^r I_i})$. Therefore, we assume that $I=\cap_{i=1}^rI_i$, where $I_i$ is $P_i$-primary, $P_i\neq P_j$ for $i\neq j$ and $\dim(S/I)=\dim(S/I_i)$ for all $1\leq i\leq r$. 

We prove the statement using induction on $r$. For $r=1$, there is nothing to prove. Now, assume $r\geq2$, and let $I=L\cap I_r$, where $L=\cap_{i=1}^{r-1}I_i$. Note that $\height(L+I_r)\geq \height(I_r)$.  We claim that $\height(L+I_r)>\height(I_r).$ Suppose not, and $\height(L+I_r)=\height(I_r)=h$. Let $Q$ be a minimal prime of $L+I_r$ such that $\height(Q)=h$. Then $I_r\subset Q$, so $P_r\subset Q$. Since $\height(P_r)=\height(Q)$, we get $P_r=Q$. Also $L=\cap_{i=1}^{r-1}I_i\subset Q$, so there exists $i<r$ such that $I_i\subset Q$. Then it follows that $P_i\subset Q$, and therefore $P_i=Q$. This contradicts the fact that $P_r\neq P_i$ for $i<r$. Hence $\height(L+I_r)>\height(I_r).$ Now, consider the short exact sequence \[ 0\rightarrow{\frac{S}{I}}\rightarrow \frac{S}{L}\oplus \frac{S}{I_r}\rightarrow \frac{S}{L+I_r}\rightarrow 0.\] Then, we have $\hs(S/I,t)=\hs(S/L,t)+\hs(S/I_r,t)-\hs(S/L+I_r,t)$. Observe that $\height(L)=\height(I_r)$. Therefore, we obtain $e_0(S/I)=e_0(S/L)+e_0(S/I_r)$. Using induction, we get the desired result.
\end{proof}

We now recall several results concerning the main algebraic object of this article, namely, generalized binomial edge ideals. These ideals have attracted considerable attention from researchers in recent years. We collect the properties required in the subsequent sections. Let $G$ be a simple graph on $[n]$ and $J_{m,G}$ be the generalized binomial edge ideal of $G$ in the polynomial ring $S=\K[x_{ij}:i\in [m],j\in [n]]$. The associated primes of $J_G$ are first determined in \cite{GBI-Rauh-2013}, and they are given using the connectivity property of the graph $G$. Let $T\subseteq [n]$, and consider $G\setminus T$, the induced subgraph of $G$ on the vertex set $[n]\setminus T$. Suppose that $G_1,\ldots , G_{c_G(T)}$ are connected components of $G\setminus T$. For $1\leq i\leq c_G(T)$, let $\widetilde{G_i}$ denote the complete graph on the vertex set $V(G_i)$. Now consider the ideal 
    \[
    P_T(m,G)\coloneqq \left \langle \{x_{ij}:(i,j)\in [m]\times T\} , J_{m,\widetilde{G_1}}, \ldots ,  J_{m,\widetilde{G_{c(T)}}}  \right\rangle.
    \]
    The ideal $P_T(G)$ is a prime ideal of $S$ containing $J_{m,G}$. By \cite[Corollary 4]{GBI-Rauh-2013}, the ideal $J_{m,G}$ is radical. To identify the minimal prime ideals among $P_T(G)$, we require a graph-theoretic notion known as the cut point property. A vertex $v\in V(G)$ is called a \emph{cut vertex} if the number of components of $G$ is less than that of $G\setminus v$. A subset $T\subset [n]$ is said to have the \emph{cut point property} or $T$ is said to be a \emph{cut set} of $G$ if $v$ is a cut vertex of $G\setminus(T\setminus v)$ for each $v\in T$. Denote $\mathscr C(G)$ the collection of all subsets of $V(G)$ having the cut point property, including the empty set. As shown in \cite[Theorem 7]{GBI-Rauh-2013}, the minimal primes of $J_{m,G}$ are precisely the prime ideals $P_T(m,G)$ for $T\in \mathscr{C}(G)$, i.e., $$J_{m, G}=\bigcap_{T\in \mathscr C(G)}P_T(m,G).$$ The height of the minimal prime $P_T(m,G)$ is $$\height(P_{T}(m,G))=(m-1)(n-c(T))+|T|.$$
So, the dimension is $$\dim(S/J_{m,G})=\max\left\{(m-1)c(T)+n-|T|\right\}.$$

We now collect some results that are used throughout this article. We begin with a result of Kumar \cite{Reg-bounds-of-GBEI-of-graphs-Arvind-2020}, which provides a useful short exact sequence.
\begin{theorem}\label{thm: Short exact sequence}\cite[Theorem 3.2]{Reg-bounds-of-GBEI-of-graphs-Arvind-2020}
    If $v\in  V(G)$ is a internal vertex, then $$J_{m,G}=J_{m,{G_v}}\cap \langle\{ x_{iv}:i\in [m]\}+J_{m,{G\setminus v}}\rangle.$$
\end{theorem}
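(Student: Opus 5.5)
The idea is to compare minimal primes on both sides. Since $J_{m,G}$ is radical, Rauh's decomposition gives $J_{m,G}=\bigcap_{T\in\mathscr C(G)}P_T(m,G)$. The same decomposition applies to $J_{m,G_v}$ and to $J_{m,G\setminus v}$. Note that $\langle x_{iv}:i\in[m]\rangle+J_{m,G\setminus v}$ is the intersection of the primes $\langle x_{iv}\rangle+P_{T'}(m,G\setminus v)=P_{T'\cup\{v\}}(m,G)$ over $T'\in\mathscr C(G\setminus v)$. The identity holds because the variables $x_{iv}$ do not appear in $J_{m,G\setminus v}$, so adding them commutes with the intersection. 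So both sides are intersections of prime ideals, and it suffices to prove two inclusions.

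\emph{The inclusion $\subseteq$.} Each generator $p_{(e,f)}$ of $J_{m,G}$ with $f\in E(G)$ has $f\in E(G_v)$, since $E(G)\subseteq E(G_v)$, so $J_{m,G}\subseteq J_{m,G_v}$. Next take a generator with $f=\{k,l\}$. If $v\notin f$, then $f\in E(G\setminus v)$. If $v\in f$, then the minor $x_{ik}x_{jl}-x_{il}x_{jk}$ has every term divisible by some $x_{\cdot v}$. In both cases $J_{m,G}\subseteq\langle x_{iv}\rangle+J_{m,G\setminus v}$.

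\emph{The inclusion $\supseteq$.} It is enough to show that every minimal prime $P_T(m,G)$, $T\in\mathscr C(G)$, contains $J_{m,G_v}$ or contains $\langle x_{iv}\rangle+J_{m,G\setminus v}$. Then the right-hand intersection lies in $\bigcap_T P_T(m,G)=J_{m,G}$.

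\emph{Case $v\in T$.} Then $P_T(m,G)\ni x_{iv}$ for all $i$. The components of $G\setminus T$ coincide with those of $(G\setminus v)\setminus(T\setminus\{v\})$. Hence $P_T(m,G)\supseteq P_{T\setminus v}(m,G\setminus v)\supseteq J_{m,G\setminus v}$; here we only need that $P_{T'}(m,H)\supseteq J_{m,H}$ for every $T'$, which is standard.

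\emph{Case $v\notin T$.} We claim that $N_G[v]\setminus T$ lies in a single component of $G\setminus T$, namely the one containing $v$. Then every edge added to form $G_v$ either meets $T$ or joins two vertices of the same component. So each generator of $J_{m,G_v}$ lies in $\langle x_{ij}: j\in T\rangle$ or in some $J_{m,\widetilde{G_i}}$. Hence $J_{m,G_v}\subseteq P_T(m,G)$. The claim holds because every neighbour of $v$ outside $T$ is adjacent to $v$, and $v\notin T$.

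This appears to prove the theorem without using the hypothesis that $v$ is internal. That hypothesis likely matters only for the equality to be non-trivial and irredundant; for a free vertex one has $G_v=G$. The main point needing care is the identification $\langle x_{iv}\rangle+P_{T'}(m,G\setminus v)=P_{T'\cup v}(m,G)$ and the containment $P_T(m,G)\supseteq J_{m,G\setminus v}$ when $T$ contains $v$. Both are straightforward from the definition of $P_T$ and the fact that $J_{m,\widetilde{H}}\supseteq J_{m,H}$ for each component $H$.
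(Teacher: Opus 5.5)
Your proof is correct: $\subseteq$ follows by checking generators, and $\supseteq$ follows by showing that each minimal prime $P_T(m,G)$ contains $\langle x_{iv}:i\in[m]\rangle+J_{m,G\setminus v}$ when $v\in T$ and contains $J_{m,G_v}$ when $v\notin T$. The paper itself quotes this result from Kumar without proof, and this prime-by-prime check is, as far as I recall, the standard argument (Ohtani's proof for $m=2$, generalized by Kumar). Your side remarks are also right: describing the right-hand side as an intersection of primes is not needed, and internality of $v$ only makes the decomposition nontrivial, since $G_v=G$ when $v$ is free.
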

The following two results provide the cut sets of the join of graphs. 
\begin{proposition}\label{prop: cut sets of join of two disconnected graphs}\cite[Proposition 4.14]{Some-CM-and-unmixed-BEI-Kiani-Madani-2015}
    Let $H$ and $H'$ be disconnected graphs on $[p]$ and $[q]$, respectively. Then $$\mathscr C(H*H')=\emptyset \bigcup\{T_1\sqcup [q]:T_1\in \mathscr C(H)\}\bigcup \left\{[p]\sqcup T_2:T_2\in \mathscr C(H'\right\}.$$
\end{proposition}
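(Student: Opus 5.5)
We need to prove Proposition (cited from Kiani–Madani): for disconnected $H$ and $H'$, the cut sets of $H*H'$ are $\emptyset$, the sets $T_1\sqcup[q]$ with $T_1\in\mathscr C(H)$, and the sets $[p]\sqcup T_2$ with $T_2\in\mathscr C(H')$. The plan is to prove both inclusions directly from the definition of the cut point property. The one structural fact used throughout is this: for $T\subseteq[p]\sqcup[q]$, the graph $G\setminus T$ is connected (or empty) whenever $T$ misses a vertex of $[p]$ and a vertex of $[q]$. Indeed, every remaining vertex of $H$ is then adjacent to every remaining vertex of $H'$. If instead $[q]\subseteq T$, then $G\setminus T=H\setminus(T\cap[p])$.

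\textbf{Inclusion $\subseteq$.} Let $T\neq\emptyset$ be a cut set of $G=H*H'$ and pick $v\in T$. Then $v$ is a cut vertex of $G\setminus(T\setminus v)$, so $G\setminus T$ has at least two components. By the fact above, $T\supseteq[p]$ or $T\supseteq[q]$; say $[q]\subseteq T$, and write $T=T_1\sqcup[q]$ with $T_1\subseteq[p]$.

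We show $T_1\in\mathscr C(H)$. Take $v\in T_1$. Since $[q]\subseteq T\setminus v$, we have $G\setminus(T\setminus v)=H\setminus(T_1\setminus v)$ and $G\setminus T=H\setminus T_1$. The cut vertex condition for $v$ in $G$ is therefore exactly the cut vertex condition for $v$ in $H$. So $T_1$ has the cut point property in $H$, and this includes the case $T_1=\emptyset$.

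The case $[p]\subseteq T$ is symmetric. If both hold, then $T=[p]\sqcup[q]$, which is already covered in the form $T_1\sqcup[q]$ with $T_1=[p]$. That case needs checking separately, which we do below.

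\textbf{Inclusion $\supseteq$.} Clearly $\emptyset\in\mathscr C(G)$. Let $T=T_1\sqcup[q]$ with $T_1\in\mathscr C(H)$. For $v\in T_1$ the argument above applies verbatim, so each such $v$ is a cut vertex.

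The real obstacle is $v\in[q]$. Here $G\setminus(T\setminus v)$ consists of $H\setminus T_1$ together with $v$ joined to every vertex of $H\setminus T_1$. This graph is connected, and removing $v$ leaves $H\setminus T_1$. So $v$ is a cut vertex precisely when $H\setminus T_1$ has at least two components.

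When $T_1=\emptyset$ this holds because $H$ is disconnected; this is exactly where the hypothesis is used. When $T_1\neq\emptyset$ it follows from $T_1\in\mathscr C(H)$: any $w\in T_1$ is a cut vertex of $H\setminus(T_1\setminus w)$, so removing $w$ strictly increases the number of components, and $H\setminus T_1$ has at least two components.

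One degenerate case needs care: $T_1=[p]$. Then $H\setminus T_1$ is empty, so $[p]\notin\mathscr C(H)$ anyway, since removing the last vertex does not increase the number of components. With the convention that the empty graph has zero components, the condition is consistent.

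By symmetry the same argument handles $[p]\sqcup T_2$, which completes the equality.
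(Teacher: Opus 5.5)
Your proof is correct and complete. The paper gives no proof of its own here and simply cites Kiani and Saeedi Madani, so your argument supplies the self-contained verification that the citation stands in for. Its two key observations are these: $G\setminus T$ is connected unless $T$ contains all of $[p]$ or all of $[q]$, and a vertex $v\in[q]$ of $T_1\sqcup[q]$ is a cut vertex exactly when $H\setminus T_1$ is disconnected.

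One simplification: in the inclusion $\subseteq$, the case $T=[p]\sqcup[q]$ never arises. You have already shown that $G\setminus T$ has at least two components, so it cannot be empty, and the forward reference to the degenerate case is unnecessary, though harmless.
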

\begin{lemma}\label{lemma: cut sets of join of a disconnected graph and a complete graph}\cite[Lemma 4.3]{HS-of-BEI-Arvind-Sarkar-2019}.
    Let $H$ be a disconnected graph on $[p]$. Let $G=H*K_q$ be the join of $H$ and the complete graph $K_q$. Then $$\mathscr C(G)=\emptyset\cup \left\{T\sqcup[q]:T\in \mathscr C(H)\right\}.$$
\end{lemma}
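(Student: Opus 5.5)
The plan is to prove the two inclusions directly from the definition of the cut point property. The one fact used throughout is that every vertex of $K_q$ is adjacent to every other vertex of $G=H*K_q$.

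\emph{Step 1: a nonempty cut set contains $[q]$.} Let $T\in\mathscr C(G)$ with $T\neq\emptyset$, and suppose some $w\in[q]$ lies outside $T$. Pick any $v\in T$. In the graph $G\setminus(T\setminus v)$ the vertex $w$ is still present and is adjacent to every other vertex, including $v$. Hence both $G\setminus(T\setminus v)$ and $G\setminus T$ are connected. So removing $v$ does not increase the number of components, and $v$ is not a cut vertex. This contradicts $T\in\mathscr C(G)$. Therefore $[q]\subseteq T$, and we may write $T=T_1\sqcup[q]$ with $T_1\subseteq[p]$.

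\emph{Step 2: $T_1\in\mathscr C(H)$.} Take $v\in T_1$. Since all of $[q]$ is removed, $G\setminus(T\setminus v)=H\setminus(T_1\setminus v)$ and $G\setminus T=H\setminus T_1$. So the statement that $v$ is a cut vertex of $G\setminus(T\setminus v)$ is exactly the statement that $v$ is a cut vertex of $H\setminus(T_1\setminus v)$. Hence $T_1$ has the cut point property in $H$. This covers $T_1=\emptyset$ as well, since $\emptyset\in\mathscr C(H)$ by convention.

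\emph{Step 3: the converse.} The empty set lies in $\mathscr C(G)$ by convention. Now let $T_1\in\mathscr C(H)$ and set $T=T_1\sqcup[q]$.
\begin{itemize}
\item For $v\in T_1$, Step 2 read backwards shows that $v$ is a cut vertex of $G\setminus(T\setminus v)$.
\item For $v\in[q]$, the graph $G\setminus(T\setminus v)$ is $H\setminus T_1$ together with $v$ joined to all of its vertices, so it is connected. Thus $v$ is a cut vertex exactly when $H\setminus T_1$ is disconnected.
\end{itemize}

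The key point is therefore to show that $H\setminus T_1$ is disconnected.
\begin{itemize}
\item If $T_1=\emptyset$, this is the hypothesis that $H$ is disconnected.
\item If $T_1\neq\emptyset$, pick $v\in T_1$. Since $v$ is a cut vertex of $H\setminus(T_1\setminus v)$, we have $c(H\setminus T_1)>c(H\setminus(T_1\setminus v))\ge 1$. So $H\setminus T_1$ has at least two components.
\end{itemize}
Hence $T\in\mathscr C(G)$.

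I do not expect any real obstacle here. The only delicate point is this last one: handling $T_1=\emptyset$ is exactly where the disconnectedness of $H$ is needed, and it is what makes $[q]$ itself a cut set.
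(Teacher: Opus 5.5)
Your proof is correct and complete. The paper gives no argument of its own here; it imports the statement from Kumar--Sarkar (Lemma 4.3). Your direct check from the definition is the natural argument: a nonempty cut set must contain all of $[q]$, since any vertex of $K_q$ dominates every induced subgraph containing it. After that, the cut-vertex condition reduces to $H\setminus(T_1\setminus v)$ for $v\in T_1$, and to disconnectedness of $H\setminus T_1$ for $v\in[q]$. You also correctly locate the only use of the hypothesis that $H$ is disconnected, namely the case $T_1=\emptyset$, which is what makes $[q]$ itself a cut set.
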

We need the Hilbert series of the generalized binomial edge ideal $J_{m,K_n}$ of $K_n$. Observe that $J_{m,K_n}$ is the classical determinantal ideal $I_2(X)$ of $2$-minors of an $m\times n$ generic matrix. Therefore, we have the following:  
\begin{corollary}\cite[Corollary 1]{on-the-Hilbert-series-of-determinantal-rings-and-theire-canonical-module-Conca-Herzog-1994}\label{cor: hs of 2-minors} The Hilbert series of $S/J_{m,K_n}$ is given by
    \[
\hs(S/J_{m,K_n})
=
\frac{
\displaystyle
\sum_{i\geq0}
\binom{m-1}{i}
\binom{n-1}{i}
t^i
}
{(1-t)^{m+n-1}}.
\]
\end{corollary}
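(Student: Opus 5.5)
The statement is a direct specialization of the Conca--Herzog formula, but I would also verify it directly, since $J_{m,K_n}=I_2(X)$ has a very concrete coordinate ring. The first step is the classical identification of $S/I_2(X)$ with the Segre product of two polynomial rings. Consider the map $S\to \K[y_1,\dots,y_m,z_1,\dots,z_n]$ given by $x_{ij}\mapsto y_iz_j$. Its kernel is exactly $I_2(X)$: it is prime, it contains all $2$-minors, and the two ideals have the same height $(m-1)(n-1)$, which matches the height formula for $P_\emptyset(m,K_n)$ recalled above. So $S/J_{m,K_n}\cong \bigoplus_{j\ge 0}\K[y]_j\otimes \K[z]_j$ as graded rings, and therefore
\[
\hf(S/J_{m,K_n},j)=\binom{m-1+j}{m-1}\binom{n-1+j}{n-1}.
\]

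It then remains to prove the power series identity
\[
\sum_{j\ge 0}\binom{m-1+j}{j}\binom{n-1+j}{j}t^j=\frac{\sum_{i\ge 0}\binom{m-1}{i}\binom{n-1}{i}t^i}{(1-t)^{m+n-1}}.
\]
For this I would use Vandermonde's identity in the form $\binom{m-1+j}{j}=\sum_i\binom{m-1}{i}\binom{j}{i}$. Multiply by $\binom{n-1+j}{j}$ and sum over $j$. Each resulting term can be recognized through
\[
\sum_j \binom{j}{i}\binom{n-1+j}{n-1}t^j=\frac{t^i}{i!}\,\frac{d^i}{dt^i}(1-t)^{-n}.
\]
This produces a closed form that must then be rearranged. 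That rearrangement is the step I expect to be fiddly.

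If the generating-function algebra becomes messy, I would switch to a bijective argument. Multiply both sides by $(1-t)^{m+n-1}$; alternatively, compare coefficients of $t^j$, where the right side gives $\sum_i\binom{m-1}{i}\binom{n-1}{i}\binom{j-i+m+n-2}{m+n-2}$. Degree-$j$ monomials in the Segre ring correspond bijectively to multichains $a_1\le\dots\le a_j$ in the product poset $[m]\times[n]$. These are exactly the standard monomials for the diagonal Gröbner basis of $I_2(X)$, which is squarefree-free of antichain products $x_{il}x_{jk}$ with $i<j$, $k<l$.

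The supporting chain of such a multichain is a lattice path in the $m\times n$ grid. Since every maximal chain has $m+n-1$ elements, the Stanley--Reisner complex of the initial ideal is the order complex of $[m]\times[n]$. This complex is shellable, with $h$-vector $h_i=\#\{\text{maximal chains with } i \text{ descents}\}$, and that count equals $\binom{m-1}{i}\binom{n-1}{i}$: it amounts to choosing the $i$ rows and $i$ columns at which an "east-then-north" corner occurs.

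The standard fact that $\hs=h(t)/(1-t)^{d}$ for a shellable complex of dimension $d-1$, with $d=m+n-1$, then gives the formula, and the equality of initial and original Hilbert series transfers it to $S/J_{m,K_n}$. I expect the main obstacle to be the shelling and corner-count step. There I would follow the standard lexicographic shelling of maximal lattice paths, where the restriction face of a path consists of its corner points.
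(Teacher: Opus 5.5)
The paper does not prove this statement. It notes that $J_{m,K_n}=I_2(X)$ and quotes Conca--Herzog, where the formula is the $t=2$ case of a general formula for the $h$-polynomial of $\K[X]/I_t(X)$.

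Your proposal is a genuinely different, self-contained derivation, and it is correct. Either of your two arguments works on its own:
\begin{itemize}
\item The Segre-product identification gives $\hf(S/J_{m,K_n},j)=\binom{m-1+j}{j}\binom{n-1+j}{j}$ directly. The generating-function identity then finishes the proof.
\item The Gr\"obner degeneration to the order complex of $[m]\times[n]$ gives the same count, via multichains. The lexicographic (EL) shelling of lattice paths, with the corner count $\binom{m-1}{i}\binom{n-1}{i}$, then identifies the numerator itself.
\end{itemize}
The citation buys generality (all minor sizes $t$) and brevity. Your route buys an explicit combinatorial reason for the numerator, namely lattice paths counted by east-then-north corners, specific to $2$-minors.

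Two small repairs are needed.

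\textbf{Primality in the Segre step.} A prime kernel containing $I_2(X)$ with the same height only shows that the kernel is a minimal prime of $I_2(X)$. To conclude equality you need $I_2(X)$ itself to be prime. The preliminaries supply this: $\mathscr C(K_n)=\{\emptyset\}$, so $J_{m,K_n}=P_\emptyset(m,K_n)$, which is prime.

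\textbf{The rearrangement you expected to be fiddly.} It closes in two lines. Your derivative formula gives $\frac{t^i}{i!}\frac{d^i}{dt^i}(1-t)^{-n}=\binom{n-1+i}{i}t^i(1-t)^{-n-i}$. Multiply by $(1-t)^{m+n-1}$ and expand $(1-t)^{m-1-i}$. Using $\binom{m-1}{i}\binom{m-1-i}{k-i}=\binom{m-1}{k}\binom{k}{i}$, the coefficient of $t^k$ is $\binom{m-1}{k}\sum_i(-1)^{k-i}\binom{k}{i}\binom{n-1+i}{i}$. This equals $\binom{m-1}{k}\binom{n-1}{k}$ by Chu--Vandermonde with negative upper index.

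A minor point: a diagonal term order makes the diagonal products $x_{ik}x_{jl}$ the leading terms. The antichain products $x_{il}x_{jk}$ you list lead for an anti-diagonal order. Reversing the column order makes the two posets isomorphic, so nothing in your count changes.
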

\section{Hilbert Series of Join of Graphs}\label{sec: HS of join of graphs}
This section is devoted to computing the Hilbert series of the generalized binomial edge ideal of the join of two graphs.

\begin{definition}\label{def: join of graphs}
Let $H$ and $H'$ be two graphs on the vertex set $[p]$ and $[q]$, respectively. The \emph{join} of $H$ and $H'$, denoted by $H*H'$ is the graph with vertex set $[p]\sqcup[q]$ and the edge set $E(H)\cup E(H')\cup \{\{i,j\}|i\in[p],j\in[q]\}$. For example, for any $n\geq 2$, $K_n$ is a join of $K_{n_1}$ and $K_{n_2}$ with $n_1+n_2=n$, i.e., $K_n=K_{n_1}*K_{n_2}$ such that $n_1+n_2=n$. The graph in Figure \ref{fig: join of graphs} is the join of $H=\{\{1\},\{2\},\{3\}\}$ and $H'=\{\{4\},\{5,6\}\}$.
\end{definition}
\begin{figure}[H]
\centering
\begin{tikzpicture}[scale=0.5, line cap=round,line join=round,,x=1cm,y=1cm]
\draw  (0,0)-- (4,2);
\draw  (0,-2)-- (4,2);
\draw  (0,2)-- (4,2);
\draw  (0,0)-- (4,0);
\draw  (0,-2)-- (4,0);
\draw  (0,2)-- (4,0);
\draw  (0,0)-- (4,-2);
\draw  (0,-2)-- (4,-2);
\draw  (0,2)-- (4,-2);
\draw  (4,2)-- (4,0);
\fill (0,0) circle (3.5pt);
\fill (4,0) circle (3.5pt);
\begin{scriptsize}
    \fill (0,0) circle (3.5pt);
\fill (0,2) circle (3.5pt);
\fill (0,-2) circle (3.5pt);
\fill (4,0) circle (3.5pt);
\fill (4,2) circle (3.5pt);
\fill (4,-2) circle (3.5pt);
\end{scriptsize}
 \draw[color=black] (-0.35,-0.35) node {$2$};
\draw[color=black] (-0.35,2) node {$1$};
\draw[color=black] (-0.35,-2) node {$3$};
\draw[color=black] (4.35,-2) node {$4$};
\draw[color=black] (4.35,0) node {$5$};
\draw[color=black] (4.35,2) node {$6$};
\end{tikzpicture}
\caption{} \label{fig: join of graphs}
\end{figure}

We set up some notation that we use throughout the section. Let $H$ and $H'$ be two graphs on vertex sets $[p]$ and $[q]$, respectively. Let $G=H*H'$ be the join of $H$ and $H'$. Set $S_H=\K[x_{ij}:i\in [m], j\in V(H)]$, $S_{H'}=\K[y_{ik}:i\in[m], k\in V(H')]$ and $S=\K[x_{ij},y_{ik}:i\in [m], j\in V(H),k\in V(H')]$. Further, let $S_l$ denote the polynomial ring of $J_{m,K_{l}}$ for any $l\in \mathbb N$. 

We first consider the disconnected case and give the Hilbert series of the join of two disconnected graphs as follows:
\begin{theorem}\label{thm: hs of join of two disconnected graphs}
    Let $H$ and $H'$ be two disconnected graphs on vertex sets $[p]$ and $[q]$, respectively. Let $G=H*H'$ be the join of $H$ and $H'$. Then 
    \begin{align*}
        \hs(S/J_{m,H*H'},t)=&\hs(S_H/J_{m,H},t)+\hs(S_{H'}/J_{m,H'},t)\\
        & +\frac{\sum_{i\geq 0}\binom{m-1}{i}\left[\binom{p+q-1}{i}-\binom{p-1}{i}(1-t)^q-\binom{q-1}{i}(1-t)^p\right]t^i}{(1-t)^{m+p+q-1}}.
    \end{align*}
\end{theorem}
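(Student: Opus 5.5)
We decompose $J_{m,G}$ into three pieces, one for each part of the set of cut sets, and compute the Hilbert series by inclusion--exclusion.

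\textbf{Step 1: the decomposition.} By Proposition~\ref{prop: cut sets of join of two disconnected graphs}, the cut sets of $G$ are $\emptyset$, the sets $T_1\sqcup[q]$ with $T_1\in\mathscr C(H)$, and the sets $[p]\sqcup T_2$ with $T_2\in\mathscr C(H')$. Since $G$ is connected, $P_\emptyset(m,G)=J_{m,K_{p+q}}$. For $T=T_1\sqcup[q]$ we have $G\setminus T=H\setminus T_1$, so
\[
P_T(m,G)=\langle y_{ik}:i\in[m],k\in[q]\rangle+P_{T_1}(m,H)S.
\]
Intersecting these over $T_1\in\mathscr C(H)$ gives $A:=\langle \mathbf y\rangle+J_{m,H}S$, where $\mathbf y$ denotes all the $y$-variables. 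This uses that adding a set of variables disjoint from the ones involved commutes with intersection, together with the decomposition $J_{m,H}=\bigcap_{T_1\in\mathscr C(H)}P_{T_1}(m,H)$. Symmetrically, $B:=\langle \mathbf x\rangle+J_{m,H'}S$. Hence
\[
J_{m,G}=J_{m,K_{p+q}}\cap A\cap B .
\]

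\textbf{Step 2: the exact sequences.} The sum $A+B=\langle\mathbf x,\mathbf y\rangle$ is the maximal homogeneous ideal. Moreover $A\cap B=\langle \mathbf x\rangle\langle\mathbf y\rangle+J_{m,H}+J_{m,H'}$, and this ideal is contained in $J_{m,K_{p+q}}+\langle\mathbf x\rangle\langle \mathbf y\rangle$, a product of monomial-type pieces. Applying the exact sequence $0\to S/(I\cap L)\to S/I\oplus S/L\to S/(I+L)\to0$ twice gives
\[
\hs(S/J_{m,G})=\hs(S/J_{m,K_{p+q}})+\hs(S/(A\cap B))-\hs\bigl(S/(J_{m,K_{p+q}}+A\cap B)\bigr).
\]
From the first sequence with $A$ and $B$,
\[
\hs(S/(A\cap B))=\hs(S_H/J_{m,H})+\hs(S_{H'}/J_{m,H'})-1 .
\]

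\textbf{Step 3: the remaining term (main obstacle).} The key claim is
\[
J_{m,K_{p+q}}+(A\cap B)=J_{m,K_{p+q}}+\langle\mathbf x\rangle\langle\mathbf y\rangle=\bigl(J_{m,K_{p+q}}+\langle\mathbf x\rangle\bigr)\cap\bigl(J_{m,K_{p+q}}+\langle\mathbf y\rangle\bigr).
\]
The two outer ideals are $\langle\mathbf x\rangle+J_{m,K_q}$ and $\langle\mathbf y\rangle+J_{m,K_p}$. I would prove the intersection equality directly. One inclusion is clear. For the other, take $f$ in the intersection and write $f=g+a=g'+b$ with $g,g'\in J_{m,K_{p+q}}$, $a\in\langle\mathbf x\rangle$, $b\in\langle\mathbf y\rangle$. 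Reduce modulo $\langle\mathbf x\rangle\langle\mathbf y\rangle$, so that the ring splits as a fiber product over $\K$. The ideal $J_{m,K_{p+q}}$ modulo the mixed products becomes $J_{m,K_p}+J_{m,K_q}$, and the check then reduces to comparing pure $x$-parts and pure $y$-parts. This step is where care is needed. If the direct argument resists, the fallback is to compare Hilbert functions using the radicality of $J_{m,K_{p+q}}+\langle\mathbf x\rangle\langle \mathbf y\rangle$, whose minimal primes are the two ideals above.

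Granting the claim, the intersection sequence gives
\[
\hs=\hs(S_q/J_{m,K_q})+\hs(S_p/J_{m,K_p})-1,
\]
since the sum of the two ideals is the maximal ideal. Combining all three steps and substituting Corollary~\ref{cor: hs of 2-minors}, written over the common denominator $(1-t)^{m+p+q-1}$, the terms $-1$ cancel. We obtain exactly
\[
\hs(S_H/J_{m,H})+\hs(S_{H'}/J_{m,H'})+\frac{\sum_i\binom{m-1}{i}\left[\binom{p+q-1}{i}-\binom{p-1}{i}(1-t)^q-\binom{q-1}{i}(1-t)^p\right]t^i}{(1-t)^{m+p+q-1}} .
\]
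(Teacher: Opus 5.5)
Your argument is correct and uses the same three-piece decomposition $J_{m,G}=J_{m,K_{p+q}}\cap A\cap B$ as the paper; your $A$ and $B$ are the paper's $Q_2$ and $Q_1$. What differs is the grouping, and that changes which identity you need.

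The paper first forms $Q_3=A\cap J_{m,K_{p+q}}$ and then intersects with $B$. Both sums that appear are immediate: $A+J_{m,K_{p+q}}=\langle\mathbf y\rangle+J_{m,K_p}$, and $B+Q_3=\langle\mathbf x\rangle+J_{m,K_q}$ (here $Q_3\subseteq J_{m,K_{p+q}}$ gives one inclusion and $J_{m,K_q}\subseteq Q_3$ the other). So two exact sequences suffice, and the step you flagged as the main obstacle never arises.

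Grouping $A\cap B$ first is cheap because $A+B$ is the maximal ideal. But it forces you to identify $J_{m,K_{p+q}}+(A\cap B)$, which costs a description of $A\cap B$, the intersection identity of Step 3, and a third exact sequence. In return the computation is symmetric and the two constants $-1$ visibly cancel.

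Your Step 3 claim does hold, and the direct route works without the radicality fallback:
\begin{itemize}
\item Each mixed minor $x_{ij}y_{lk}-x_{lj}y_{ik}$ lies in $\langle\mathbf x\rangle\langle\mathbf y\rangle$. Hence $J_{m,K_{p+q}}+\langle\mathbf x\rangle\langle\mathbf y\rangle=J_{m,K_p}+J_{m,K_q}+\langle\mathbf x\rangle\langle\mathbf y\rangle$.
\item Write $f=c+f_1(\mathbf x)+f_2(\mathbf y)+f_3$ with $f_1(0)=f_2(0)=0$ and $f_3\in\langle\mathbf x\rangle\langle\mathbf y\rangle$. Reducing modulo $\langle\mathbf x\rangle$ and modulo $\langle\mathbf y\rangle$ shows that if $f$ lies in both $\langle\mathbf x\rangle+J_{m,K_q}$ and $\langle\mathbf y\rangle+J_{m,K_p}$, then $c=0$, $f_1\in J_{m,K_p}$ and $f_2\in J_{m,K_q}$.
\end{itemize}
The same decomposition also proves your formula $A\cap B=\langle\mathbf x\rangle\langle\mathbf y\rangle+J_{m,H}+J_{m,H'}$, which you stated without justification and should support this way.
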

\begin{proof}
    It follows from Proposition \ref{prop: cut sets of join of two disconnected graphs} that 
    $$\mathscr C(H*H')=\emptyset \bigcup\{T_1\sqcup [q]:T_1\in \mathscr C(H)\}\bigcup \left\{[p]\sqcup T_2:T_2\in \mathscr C(H'\right\}.$$
    Let \[
Q_1=\bigcap_{\substack{T\in\mathscr{C}(G)\\ [p]\subseteq T}}P_T(m,G)=(x_{ij}:i\in[m],j\in [p])+J_{H'} 
\] and  \[
Q_2=\bigcap_{\substack{T\in\mathscr{C}(G)\\ T\neq\emptyset,[p]\not\subseteq T}}P_T(m,G)=(y_{ik}:i\in[m],k\in [q])+J_{H}. 
\]
Then, by \cite[Theorem 7]{GBI-Rauh-2013}, we have 
\[J_{m,G}=\bigcap_{T\in \mathscr{C}(G)}P_T(m,G)=Q_1\cap Q_2\cap P_{\emptyset}(m,G)=Q_1\cap Q_2\cap J_{m,K_{p+q}}.\]
Set $Q_3=Q_2\cap  J_{m,K_{p+q}}$. Then we have the short exact sequence 
\[ 0\rightarrow{\frac{S}{Q_3}}\rightarrow \frac{S}{Q_2}\oplus \frac{S}{J_{m,K_{p+q}}}\rightarrow \frac{S}{Q_2+J_{m,K_{p+q}}}\rightarrow 0.\]
Observe that $Q_2+J_{m,K_{p+q}}=(y_{ik}:i\in[m],k\in [q])+J_{m,K_p}.$ Therefore, we get
\begin{align*}
    \hs(S/{Q_3},t)&=\hs(S/{J_{m,K_p+q}},t)+\hs(S/{Q_2},t)-\hs(S/({Q_2+J_{m,K_{p+q}}}),t)\\
    &=\hs(S/{J_{m,K_p+q}},t)+\hs(S_H/{J_{m,H}},t)-\hs(S_p/{J_{m,K_{p}}},t).
\end{align*}
Now, consider the short exact sequence 
\[ 0\rightarrow{\frac{S}{J_{m,G}}}\rightarrow \frac{S}{Q_1}\oplus \frac{S}{Q_3}\rightarrow \frac{S}{Q_1+Q_3}\rightarrow 0.\]
Notice that $Q_1+Q_3=(x_{ij}:i\in[m],j\in [p])+J_{m,K_q}$. Therefore, we have
\begin{align*}
    \hs(S/J_{m,G},t)&=\hs(S/{Q_1},t)+\hs(S/{Q_3},t)-\hs(S/(Q_1+Q_3),t)\\
    &=\hs(S_{H'}/{J_{m,H'}},t)+\hs(S_H/{J_{m,H}},t)+\hs(S/{J_{m,K_p+q}},t)\\&\quad -\hs(S_p/{J_{m,K_{p}}},t)-\hs(S_q/{J_{m,K_{q}}},t).
\end{align*}
Now, the result follows from Corollary \ref{cor: hs of 2-minors}.
\end{proof}
We require the Hilbert series of the product of two graphs defined in \cite{HS-of-BEI-Arvind-Sarkar-2019} to compute the Hilbert series of arbitrary graphs. 
Let $H$ be a graph on $[p]$ and $H'$ be a graph on $[q]$. Let $v_1,\ldots,v_r$ be vertices of $H'$ with $1\leq r\leq q$. Denote $H(*)^rH'$ the graph on vertex set $[p]\sqcup[q]$ and edge set $$E(H)\sqcup E(H')\sqcup\left\{\{u,v_i\}:u\in V(H), i=1,\ldots,r\right\}.$$ If $r=q$, then $H(*)^qH'=H*H'$ is the join of $H$ and $H'$. The graph shown in Figure \ref{fig: join of H and some points of K_q} is $H(*)^2K_4$, where $H=\{\{6,7\},\{7\}\}$.
\vspace{2mm}
\begin{figure}[h]
    \centering
     \begin{tikzpicture}[scale=0.6,line cap=round,line join=round,
    x=1cm,y=1cm]
\draw (0,0)--(0,2);
\draw (0,2)--(2,2);
\draw (2,0)--(2,2);
\draw (2,0)--(0,0);
\draw (0,0)--(2,2);
\draw (2,0)--(0,2);
\draw (-2,-2)--(0,0);
\draw (-2,-2)--(0,2);
\draw (-2,0)--(-2,2);
\draw (-2,2)--(0,2);
\draw (-2,2)--(0,0);
\draw (-2,0)--(0,2);
\draw (-2,0)--(0,0);
\foreach \x/\y in {
    0/0,
    0/2,
    2/2,
    2/0,
    -2/-2,
    -2/0,
    -2/2
}
{
    \fill (\x,\y) circle (2.5pt);
}
\node[below right] at (0,0) {$1$};
\node[above right] at (0,2) {$2$};
\node[above right] at (2,2) {$3$};
\node[below right] at (2,0) {$4$};

\node[below left] at (-2,-2) {$5$};
\node[below left] at (-2,0) {$6$};
\node[above left] at (-2,2) {$7$};
\end{tikzpicture}
    \caption{}\label{fig: join of H and some points of K_q}
    \label{fig:placeholder}
\end{figure}
\newline
Now, we give the Hilbert series of $H(*)^rK_q$ as follows.
\begin{lemma}\label{lemma: hs of join of a graph and some points of a complete graph}
    Let $H$ be a graph on $[p]$ and $G=H(*)^rK_q$ be the graph on the vertex set $[p]\sqcup[q]$ with $q\geq2$ and $1\leq r<q$. Then \begin{align*}
    \hs(S/J_{m,G},t)&=\hs(S_H/J_{m,H},t)\hs(S_{{q-r}}/J_{m,K_{q-r}},t)+\hs(S/{J_{m,K_p+q}},t)\\
    &\quad\quad-\hs(S_{{p+q-r}}/J_{m,K_{p+q-r}},t).
    \end{align*}
\end{lemma}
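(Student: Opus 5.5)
Let $v_1,\ldots,v_r$ be the vertices of $K_q$ joined to all of $H$, and $W=\{v_{r+1},\ldots,v_q\}$ the remaining $q-r\geq 1$ vertices. The plan is to split $J_{m,G}$ as an intersection of two ideals whose Hilbert series we know, then use the exact sequence of an intersection as in the proof of Theorem~\ref{thm: hs of join of two disconnected graphs}.

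First we identify the minimal primes. Every $v_i$ with $i\leq r$ lies in both $K_q$ and the clique spanned by $N[v_i]=V(G)$; so if $H$ is not complete, $v_i$ is internal. We claim that a nonempty $T\in\mathscr C(G)$ must contain $\{v_1,\ldots,v_r\}$. If some $v_i\notin T$, then $v_i$ is adjacent to everything outside $T$, so $G\setminus T$ is connected, and removing any single $u\in T$ cannot create a cut. Conversely, set $T_0=\{v_1,\ldots,v_r\}$. Then $G\setminus T_0=H\sqcup K_{q-r}$, and the cut sets containing $T_0$ are $T_0\sqcup T'$ with $T'\in\mathscr C(H)$. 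Indeed, $W$ stays a clique attached to nothing, so each $v_i\in T_0$ reconnects $W$ to $H$ and is a cut vertex. Hence
$$J_{m,G}=P_\emptyset(m,G)\cap Q,\qquad Q=\bigcap_{T'\in\mathscr C(H)}P_{T_0\sqcup T'}(m,G)=(x_{iv_j}:i\in[m],j\leq r)+J_{m,H}+J_{m,K_W}.$$
Here $P_\emptyset=J_{m,K_{p+q}}$. (Alternatively, Theorem~\ref{thm: Short exact sequence} applied successively at $v_1,\ldots,v_r$ yields the same decomposition, which sidesteps verifying the cut-point claim.)

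Next we compute the Hilbert series. Since $Q$ lives on disjoint variable sets with $r m$ variables killed, $\hs(S/Q,t)=\hs(S_H/J_{m,H},t)\,\hs(S_{q-r}/J_{m,K_{q-r}},t)$. For the sum, $Q+J_{m,K_{p+q}}$ is the ideal of the $x_{iv_j}$ ($j\le r$) plus $J_{m,K_{p+q-r}}$ on the remaining vertices. This uses that $J_{m,H}+J_{m,K_W}\subseteq J_{m,K_{V\setminus T_0}}$, and that the $2$-minors involving a killed column vanish modulo those variables. Hence $\hs(S/(Q+J_{m,K_{p+q}}),t)=\hs(S_{p+q-r}/J_{m,K_{p+q-r}},t)$. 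The exact sequence $0\to S/J_{m,G}\to S/Q\oplus S/J_{m,K_{p+q}}\to S/(Q+J_{m,K_{p+q}})\to0$ then gives the formula.

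The main obstacle is justifying the decomposition $J_{m,G}=J_{m,K_{p+q}}\cap Q$ correctly, including degenerate cases. If $H$ is complete, $G$ is a union of two cliques $K_{p+r}$ and $K_q$ meeting in $T_0$; the cut-set description above still applies, with $\mathscr C(H)=\{\emptyset\}$. If $H$ is disconnected, we must check that vertices of $T'$ remain cut vertices after adding $T_0$; this holds because $G\setminus(T_0\sqcup T')=(H\setminus T')\sqcup K_W$. I would first try the repeated use of Theorem~\ref{thm: Short exact sequence}, or simply verify both inclusions of ideals directly via radicality and the minimal-prime description.
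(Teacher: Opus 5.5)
Your proposal is correct and is essentially the paper's proof: you use the same decomposition $J_{m,G}=J_{m,K_{p+q}}\cap\bigl((x_{iv_j}:i\in[m],\,j\le r)+J_{m,H}+J_{m,K_{q-r}}\bigr)$ and the same short exact sequence for an intersection. The only difference is that the paper quotes the description of $\mathscr C(G)$ from \cite[Lemma 4.8]{HS-of-BEI-Arvind-Sarkar-2019}, whereas you verify it directly; your alternative of applying Theorem~\ref{thm: Short exact sequence} repeatedly at $v_1,\ldots,v_r$ also works, because each intermediate component $(x_{iv_1},\ldots,x_{iv_k})+J_{m,K_{p+q-k}}$ contains $J_{m,K_{p+q}}$ and is therefore redundant.
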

\begin{proof}
   Let $v_1,\ldots,v_r$ be the vertices of $K_q$. Then, by \cite[Lemma 4.8]{HS-of-BEI-Arvind-Sarkar-2019}, we have \[\mathscr{C}(G)=\{\emptyset\}\cup\left\{T\sqcup\{v_1,\ldots,v_r\}:T\in \mathscr{C}(H)\right\}.\]
  It follows from \cite[Theorem 7]{GBI-Rauh-2013} that
 \begin{align*}
 J_{m,G}&=\bigcap_{T\in \mathscr{C}(G)}P_T(m,G)=P_{\emptyset}(m,G)\cap\bigcap_{\substack{T\in \mathscr{C}(G),T\neq \emptyset}}P_T(m,G)\\
 &=J_{m,K_{p+q}}\cap\left((x_{iv_j}:i\in [m], j\in[r])+J_{m,H}+J_{m,K_{q-r}}\right).\end{align*}
 Let $Q=(x_{iv_j}:i\in [m], j\in[r])+J_{m,H}+J_{m,K_{q-r}}$. Note that $$J_{m,K_{p+q}}+Q=(x_{iv_j}:i\in [m], j\in[r])+J_{m,K_{p+q-r}}.$$ Then the short exact sequence below 
 \[ 0\rightarrow{\frac{S}{J_{m,G}}}\rightarrow \frac{S}{J_{m,K_{p+q}}}\oplus \frac{S}{Q}\rightarrow \frac{S}{Q+J_{m,K_{p+q}}}\rightarrow 0\] gives
 \begin{align*}  
 \hs(S/J_{m,G},t)&=\hs(S/Q,t)+\hs(S/J_{m,K_{p+q}},t)-\hs(S/(Q+J_{m,K_p+q}),t)\\
    &=\hs(S_H/J_{m,H},t)\hs(S_{{q-r}}/J_{m,K_{q-r}},t)+\hs(S/{J_{m,K_p+q}},t)\\
    &\quad\quad-\hs(S_{{p+q-r}}/J_{m,K_{p+q-r}},t).
 \end{align*} This completes the proof.
\end{proof}
We now turn to the connected case, beginning with \(H*K_q\), where \(H\) is connected. Some additional results are required for this purpose. We first consider the case \(q=1\).
\begin{lemma}\label{lemma: HS of connected and a vertex}
    Let $H$ be a connected graph on $[p]$ and $G=H*\{v\}$. Then 
    $$\hs(S/J_{m,G},t)=\hs(S_H/J_{m,H},t)+\hs(S_{p+1}/J_{m,K_{p+1}},t)-\hs(S_p/J_{m,K_p},t).$$
    \begin{proof}
        The result trivially follows if $H=K_p$. Now, assume that $H\neq K_p$, so that $v$ is an internal vertex in $G$. Then, by Theorem \ref{thm: Short exact sequence}, we have $$J_{m,G}=J_{m,{G_v}}\cap \langle\{ x_{iv}:i\in [m]\}+J_{m,{G\setminus v}}\rangle.$$ Note that $G_v=K_{p+1}$ and $G\setminus v=H$. The short exact sequence
        \[ 0\rightarrow{\frac{S}{J_{m,G}}}\rightarrow \frac{S}{J_{m,K_{p+1}}}\oplus \frac{S}{(x_{iv}:i\in [m])+J_{m,H}}\rightarrow \frac{S}{(x_{iv}:i\in [m])+J_{m,K_{p}}}\rightarrow 0\] then implies that 
         $$\hs(S/J_{m,G},t)=\hs(S_H/J_{m,H},t)+\hs(S_{p+1}/J_{m,K_{p+1}},t)-\hs(S_p/J_{m,K_p},t).$$ Thus, the statement follows.
    \end{proof}
\end{lemma}
 We now compute the Hilbert series of a $q$-cone over a connected graph $H$. Denote the $q$ isolated vertices by $K_q^c$.
 \begin{lemma}\label{lemma: hs of join of connected graph and isolated vertices}
     Let $H$ be a connected graph on $[p]$. Let $G=H*K_q^c$ be the join $H$ and $K_q^c$. Then $$\hs(S/J_{m,G},t)=\hs(S_H/J_{m,H},t)+\hs(S/J_{m,K_p*K_q^c},t)-\hs(S_p/J_{m,K_p},t).$$
 \end{lemma}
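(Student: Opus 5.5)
We argue by induction on $q$; the case $q=1$ is exactly Lemma~\ref{lemma: HS of connected and a vertex}, since $K_p*K_1^c=K_{p+1}$. The idea is to remove an isolated vertex $w$ of $K_q^c$. If $H=K_p$ the statement is trivial, so assume $H\neq K_p$. Then $w$ is an internal vertex of $G=H*K_q^c$: its neighbourhood is $[p]$, which is not a clique. Theorem~\ref{thm: Short exact sequence} then gives
\[
J_{m,G}=J_{m,G_w}\cap\big((x_{iw}:i\in[m])+J_{m,G\setminus w}\big),
\]
where $G\setminus w=H*K_{q-1}^c$ and $G_w=K_p*K_q^c$ (completing $N[w]=[p]\cup\{w\}$ just turns $H$ into $K_p$). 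The sum of the two ideals is $(x_{iw})+J_{m,(G_w)\setminus w}=(x_{iw})+J_{m,K_p*K_{q-1}^c}$, since the graph $(G\setminus w)_{w}$ restricted away from $w$ has $H$ completed to $K_p$.

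The sum claim needs a short justification. First, $J_{m,G_w}+(x_{iw})=J_{m,G_w\setminus w}+(x_{iw})$, because every minor involving column $w$ lies in $(x_{iw})$. Second, $J_{m,G\setminus w}\subseteq J_{m,G_w\setminus w}$, because $G\setminus w$ is a subgraph of $G_w\setminus w$. Together these give the stated sum.

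From the short exact sequence $0\to S/J_{m,G}\to S/J_{m,G_w}\oplus S/((x_{iw})+J_{m,G\setminus w})\to S/((x_{iw})+J_{m,K_p*K_{q-1}^c})\to 0$, and writing $\hs(\cdot)$ for the Hilbert series in the appropriate polynomial ring (killing the variables $x_{iw}$), we get
\[
\hs(J_{m,G})=\hs(J_{m,K_p*K_q^c})+\hs(J_{m,H*K_{q-1}^c})-\hs(J_{m,K_p*K_{q-1}^c}).
\]
By induction,
\[
\hs(J_{m,H*K_{q-1}^c})=\hs(J_{m,H})+\hs(J_{m,K_p*K_{q-1}^c})-\hs(J_{m,K_p}).
\]
Substituting, the two terms $\hs(J_{m,K_p*K_{q-1}^c})$ cancel, which yields exactly the claimed formula.

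The main obstacle is the identification of the sum ideal $J_{m,G_w}+(x_{iw})+J_{m,G\setminus w}$, justified above, and confirming that $w$ is internal. The latter requires $H\neq K_p$, which is precisely why that case is split off. The induction step also silently uses that $H$ stays connected, which holds throughout since $H$ is never modified. No cut-set description is needed with this approach.
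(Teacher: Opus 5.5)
Your proposal is correct and follows essentially the same route as the paper: induction on $q$ with base case Lemma~\ref{lemma: HS of connected and a vertex}, splitting $J_{m,G}$ at an isolated vertex $w$ of $K_q^c$ via Theorem~\ref{thm: Short exact sequence} with $G_w=K_p*K_q^c$, $G\setminus w=H*K_{q-1}^c$ and $G_w\setminus w=K_p*K_{q-1}^c$, and then cancelling the two $K_p*K_{q-1}^c$ terms. Your explicit justifications that $w$ is internal when $H\neq K_p$, and that the sum of the two ideals is $(x_{iw}:i\in[m])+J_{m,K_p*K_{q-1}^c}$, supply steps the paper only asserts.
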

 \begin{proof}
 The result is immediate if $H=K_q$. Therefore, we assume that $H$ is not a complete graph.
     We proceed by induction on $q$. For $q=1$, the graph $G$ is $H*\{v\}$ and the result follows from Lemma \ref{lemma: HS of connected and a vertex}. Now assume $q>1$ and that the statement holds for $q-1$. Let $G=H*K_q^c$ and $V(K_q^c)=\{v_1,\ldots,v_q\}$. Set $Q_1=(x_{iv_q}:i\in [m])+J_{m,G\setminus {v_q}}$ and $Q_2=J_{m,G_{v_q}}$. Note that $Q_1+Q_2=(x_{iv_q}:i\in [m])+J_{m,G_{v_q}\setminus {v_q}}$. Since $v_q$ is not a free vertex, it follows from Theorem \ref{thm: Short exact sequence} that $J_{m,G}=Q_1\cap Q_2$. Let $R=\K[y_{ik}:k\in V(G)\setminus\{v_q\}]$. Notice that $G\setminus v_q=H*K_{q-1}^c$. Then, by the induction hypothesis, we get  $$\hs(S/Q_1,t)=\hs(S_H/J_{m,H},t)+\hs(R/J_{m,K_p*K_{q-1}^c},t)-\hs(S_p/J_{m,K_p},t).$$
     Consider the short exact sequence 
     \[ 0\rightarrow{\frac{S}{J_{m,G}}}\rightarrow \frac{S}{Q_1}\oplus \frac{S}{Q_2}\rightarrow \frac{S}{Q_1+Q_2}\rightarrow 0.\]
Note that $G_{v_q}=K_p*K_q^c$, $G_{v_q}\setminus v_q=K_p*K_{q-1}^c$. Therefore, it follows from the above short exact sequence that 
\begin{align*}
    \hs(S/J_{m,G},t)&=\hs(S/Q_1,t)+\hs(S/Q_2,t)-\hs(S/Q_1+Q_2,t)\\
    &=\hs(S_H/J_{m,H},t)+\hs(R/J_{m,K_p*K_{q-1}^c},t)-\hs(S_p/J_{m,K_p},t)\\
    &\quad +\hs(S/J_{m,K_p*K_q^c},t)-\hs(R/J_{m,K_p*K_{q-1}^c},t)\\
    &=\hs(S_H/J_{m,H},t)+\hs(S/J_{m,K_p*K_q^c},t)-\hs(S_p/J_{m,K_p},t)..
\end{align*}
     This completes the proof.
 \end{proof}
 Next, we compute the Hilbert series of the join of a connected graph and a complete graph.
 \begin{lemma}\label{lemma: hs of join of a connected and a complete graph}
     Let $H$ be a connected graph on $[p]$ and $G=H*K_q$, where $q\geq 2$. Then $$\hs(S/J_{m,G},t)=\hs(S_H/J_{m,H},t)+\hs(S/J_{m,K_{p+q}},t)-\hs(S_p/J_{m,K_p},t).$$
 \end{lemma}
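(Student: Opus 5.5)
Let $V(K_q)=\{v_1,\ldots,v_q\}$. We argue by induction on $q$, peeling off one vertex of $K_q$ at a time with Theorem \ref{thm: Short exact sequence}. We may assume $H\neq K_p$, since otherwise $G=K_{p+q}$ and the formula is trivial. Then every $v\in V(K_q)$ is an internal vertex of $G$. Indeed, $N_G[v]=V(G)$, and $G$ is not complete because $H$ is not, so $N_G[v]$ is not a clique and $v$ lies in at least two maximal cliques.

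Fix $v=v_q$. Theorem \ref{thm: Short exact sequence} gives
\[
J_{m,G}=J_{m,G_v}\cap\left((x_{iv}:i\in[m])+J_{m,G\setminus v}\right).
\]
Since $N_G[v]$ is all of $V(G)$, we have $G_v=K_{p+q}$. We also have $G\setminus v=H*K_{q-1}$. The sum of the two ideals is $(x_{iv}:i\in[m])+J_{m,K_{p+q-1}}$, because $J_{m,G\setminus v}\subseteq J_{m,K_{p+q-1}}$ and killing the column of $v$ in $J_{m,K_{p+q}}$ leaves $J_{m,K_{p+q-1}}$. The associated short exact sequence then yields
\[
\hs(S/J_{m,G},t)=\hs(S/J_{m,K_{p+q}},t)+\hs(S'/J_{m,H*K_{q-1}},t)-\hs(S_{p+q-1}/J_{m,K_{p+q-1}},t),
\]
where $S'$ is the polynomial ring without the variables of $v$.

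For the induction, we take the base case $q=1$. Here $H*K_1=H*\{v\}$, and Lemma \ref{lemma: HS of connected and a vertex} gives exactly the claimed formula with $q=1$. This lets the statement be proved for all $q\geq1$. In the inductive step, the hypothesis for $q-1$ gives
\[
\hs(S'/J_{m,H*K_{q-1}},t)=\hs(S_H/J_{m,H},t)+\hs(S_{p+q-1}/J_{m,K_{p+q-1}},t)-\hs(S_p/J_{m,K_p},t).
\]
Substituting this into the previous display, the two $K_{p+q-1}$ terms cancel. What remains is $\hs(S_H/J_{m,H},t)+\hs(S/J_{m,K_{p+q}},t)-\hs(S_p/J_{m,K_p},t)$, as required.

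The main point needing care is the identification of the sum ideal $J_{m,K_{p+q}}+(x_{iv})+J_{m,G\setminus v}$ with $(x_{iv})+J_{m,K_{p+q-1}}$. This holds because setting $x_{iv}=0$ in the $2$-minors of $J_{m,K_{p+q}}$ gives the $2$-minors not involving column $v$, together with monomials $x_{iv}x_{jw}$ that already lie in $(x_{iv})$. The other point is checking that the vertex being removed is internal at every stage of the induction. This holds because $H$ stays non-complete and connected, so $H*K_{q'}$ is never complete for any $q'$.
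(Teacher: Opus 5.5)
Your proof is correct, but it takes a different route from the paper's.

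\textbf{Your route.} You induct on $q$ and apply Theorem \ref{thm: Short exact sequence} directly at a vertex $v$ of $K_q$. Since $N_G[v]=V(G)$, you get $G_v=K_{p+q}$ and the sum ideal $(x_{iv})+J_{m,K_{p+q-1}}$. The resulting recursion telescopes down to Lemma \ref{lemma: HS of connected and a vertex}, which is just the $q=1$ instance of your own inductive step.

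\textbf{The paper's route.} The paper does not peel vertices off $K_q$. It adjoins an auxiliary isolated vertex $w$ to $H$, so that $G'=(H\sqcup\{w\})*K_q^c$ is a join of two disconnected graphs. It computes $\hs(S'/J_{m,G'},t)$ by Theorem \ref{thm: hs of join of two disconnected graphs} and then decomposes at $w$, using $G'_w=H(*)^qK_{q+1}$, $G'\setminus w=H*K_q^c$ and $G'_w\setminus w=G$. It finishes with Lemmas \ref{lemma: hs of join of a graph and some points of a complete graph} and \ref{lemma: hs of join of connected graph and isolated vertices}. It also tacitly uses the Hilbert series of $K_p*K_q^c$, which comes from Lemma \ref{lemma: cut sets of join of a disconnected graph and a complete graph}. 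The hypothesis $q\geq 2$ is needed there precisely so that $K_q^c$ is disconnected.

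\textbf{Comparison.} Your argument is shorter and self-contained: it uses only Theorem \ref{thm: Short exact sequence} and works for every $q\geq 1$. It also never uses connectedness of $H$, since the vertices of $K_q$ are internal as soon as $H\neq K_p$. So your closing remark that $H$ ``stays connected'' is unnecessary, and you have in fact proved the formula for arbitrary $H$. This is consistent with Theorem \ref{thm: hs of join of graphs} specialized to $H'=K_q$. What the paper's detour buys is uniformity: the same auxiliary-vertex trick is what drives its proof of Theorem \ref{thm: hs of join of graphs}.
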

 \begin{proof}
     Let $H'=H\sqcup \{w\}$, $G'=H'*K_q^c$, $S_{H'}=S_H[z_{iw}:i\in [m]]$ and $S'=S[z_{iw}:i\in [m]]$. Then, by Theorem \ref{thm: hs of join of two disconnected graphs}, we obtain
     \begin{align*}
\hs(S'/J_{m,G'},t)=&\frac{1}{(1-t)^m}\hs(S_H/J_{m,H},t)+\frac{1}{(1-t)^{mq}}+\hs(S'/J_{m,K_{p+q+1}},t)\\
        &-\hs(S_{H'}/J_{m,K_{p+1}},t)-\hs(S_q/J_{m,K_{q}},t).
    \end{align*}
    Since $w$ is not a free vertex of $G'$, it follows from Theorem \ref{thm: Short exact sequence} that $$J_{m,G'}=J_{m,G'_w}\cap \left ((x_{iw}:i\in [m])+J_{m,G'\setminus w}\right ).$$ Let $Q_1=(x_{iw}:i\in [m])+J_{m,G'\setminus w} $ and $Q_2=J_{m,G'_w}$. Then it is easy to observe that $Q_1+Q_2=(x_{iw}:i\in [m])+J_{m,G'_w\setminus w}$. Note that $G'_w=H(*)^qK_{q+1}$, $G'\setminus w=H*K_q^c$ and $G'_w\setminus w=H*K_q=G$. Consider the short exact sequence 
      \[ 0\rightarrow{\frac{S'}{J_{m,G'}}}\rightarrow \frac{S'}{Q_1}\oplus \frac{S'}{Q_2}\rightarrow \frac{S'}{Q_1+Q_2}\rightarrow 0.\] Then, we have
          $$\hs(S/J_{m,G})=\hs(S'/Q_1,t)+\hs(S'/Q_2,t)-\hs(S'/J_{m,G'},t).$$
Since $\hs(S'/Q_1,t)=\hs(S/J_{m,H*K_q^c},t)$ and $\hs(S'/Q_2,t)=\hs(S'/J_{m,H(*)^qK_{q+1}},t)$, the result follows from Lemma \ref{lemma: hs of join of a graph and some points of a complete graph} and \ref{lemma: hs of join of connected graph and isolated vertices}.          
 \end{proof}
 Now, we are in a position to compute the Hilbert series of the join of arbitrary graphs
 \begin{theorem}\label{thm: hs of join of graphs}
     Let $H$  and $H'$ be two graphs on $[p]$ and $[q]$, respectively. Let $G=H*H'$ be the join of $H$ and $H'$. Then 
     \begin{align*}
        \hs(S/J_{m,H*H'},t)=&\hs(S_H/J_{m,H},t)+\hs(S_{H'}/J_{m,H'},t)\\
        & +\frac{\sum_{i\geq 0}\binom{m-1}{i}\left[\binom{p+q-1}{i}-\binom{p-1}{i}(1-t)^q-\binom{q-1}{i}(1-t)^p\right]t^i}{(1-t)^{m+p+q-1}}.
    \end{align*}
 \end{theorem}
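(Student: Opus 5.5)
The plan is to split into cases according to the connectivity of $H$ and $H'$, and in each case reduce to results already established. It is convenient to rewrite the claimed formula first. By Corollary \ref{cor: hs of 2-minors}, the correction term equals
\[
\hs(S/J_{m,K_{p+q}},t)-\hs(S_p/J_{m,K_p},t)-\hs(S_q/J_{m,K_q},t),
\]
since $\hs(S_p/J_{m,K_p},t)=\sum_i\binom{m-1}{i}\binom{p-1}{i}t^i/(1-t)^{m+p-1}$, and multiplying numerator and denominator by $(1-t)^q$ gives the term with denominator $(1-t)^{m+p+q-1}$; the same holds with $p,q$ swapped. So the target identity is
\[
\hs(S/J_{m,G},t)=\hs(S_H/J_{m,H},t)+\hs(S_{H'}/J_{m,H'},t)+\hs(S/J_{m,K_{p+q}},t)-\hs(S_p/J_{m,K_p},t)-\hs(S_q/J_{m,K_q},t).
\]

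\emph{Case 1: both $H$ and $H'$ are disconnected.} This is exactly Theorem \ref{thm: hs of join of two disconnected graphs}.

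\emph{Case 2: $H$ is connected and $H'=K_q$.} For $q\geq 2$, Lemma \ref{lemma: hs of join of a connected and a complete graph} gives the identity directly, since $\hs(S_{H'}/J_{m,K_q},t)$ cancels the $-\hs(S_q/J_{m,K_q},t)$ term. For $q=1$, Lemma \ref{lemma: HS of connected and a vertex} gives the same.

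\emph{Case 3: $H$ is connected and $H'$ is arbitrary.} The idea is that $H*H'$ depends on $H$ only through the join structure, so we swap $H$ for a complete graph and reduce to Case 2 or Case 1. I would mimic the proof of Lemma \ref{lemma: hs of join of connected graph and isolated vertices}: induct on the number of non-edges of $H$, or equivalently repeatedly apply Theorem \ref{thm: Short exact sequence} at a vertex $v\in V(H)$ that is internal in $G$. For such $v$ we have $G_v=H_v*H'$ and $G\setminus v=(H\setminus v)*H'$, and the intersection of the two pieces is $(x_{iv})+J_{m,(H_v\setminus v)*H'}$. However, $H\setminus v$ may become disconnected, so I would induct on $p$ and treat both connectivity situations inside the induction. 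Let $\Phi(H)=\hs(S/J_{m,H*H'})-\hs(S_H/J_{m,H})$. The short exact sequence gives
\[
\Phi(H)=\Phi(H_v)+\Phi(H\setminus v)-\Phi(H_v\setminus v),
\]
provided the same sequence holds for $H$ itself, that is, $\hs(H)=\hs(H_v)+\hs(H\setminus v)-\hs(H_v\setminus v)$ shifted by the $m$ variables of $v$. This holds by Theorem \ref{thm: Short exact sequence} applied in $H$ when $v$ is internal in $H$. If $v$ is free in $H$ then $H_v=H$ and nothing is needed. By induction, $\Phi$ equals the common value $\hs(K_{p+q})-\hs(K_p)+\hs(H')-\hs(K_q)$ for all graphs on fewer vertices or with more edges. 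Two checks are needed here: the right-hand expression is additive in the same way once the $\hs(K_{\cdot})$ terms are shifted by $1/(1-t)^m$, and the recursion terminates at $H=K_p$. If $H'$ is complete, the terminal case is Case 2. If $H'$ is disconnected, the terminal case of disconnected $H$ is Case 1. If $H'$ is connected and not complete, I would run the same reduction on the $H'$ side first, reducing $H'$ to $K_q$, which is Case 2 with roles swapped. The statement is symmetric in $H$ and $H'$.

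\emph{Case 4: $H$ is disconnected and $H'$ is connected.} This follows by symmetry from Case 3.

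The main obstacle is making this vertex-removal induction rigorous. The delicate points are the bookkeeping of the base cases $p=1$ and $q=1$, where $H$ may be a single vertex, and checking that the ideal-theoretic identity $Q_1+Q_2=(x_{iv})+J_{m,G_v\setminus v}$ holds as used in the earlier lemmas. A safer alternative is to follow the paper's pattern. For connected $H$, add an isolated vertex $w$ to make $H\sqcup\{w\}$ disconnected, apply Case 1 or Case 2 to the join with $H'$, and then peel off $w$ via Theorem \ref{thm: Short exact sequence}, exactly as in Lemma \ref{lemma: hs of join of a connected and a complete graph}. If both $H$ and $H'$ are connected, the auxiliary graph $(H\sqcup w)*H'$ falls into Case 4, which is handled first by adding an isolated vertex on the other side as well.
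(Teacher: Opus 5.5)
Your rewriting of the correction term as $\hs(S/J_{m,K_{p+q}},t)-\hs(S_p/J_{m,K_p},t)-\hs(S_q/J_{m,K_q},t)$ is correct, and so are your Cases 1 and 2. The vertex-removal induction in Case 3, however, rests on a false identification. For $v\in V(H)$ we have $N_G(v)=N_H(v)\cup V(H')$, so completing $N_G[v]$ also turns $H'$ into a clique. Thus $G_v=H_v*K_q$ and $G_v\setminus v=(H_v\setminus v)*K_q$, not $H_v*H'$ and $(H_v\setminus v)*H'$; the two agree only when $H'$ is already complete, which is Case 2. So Theorem \ref{thm: Short exact sequence} at $v$ actually gives
\[
\hs(S/J_{m,G},t)=\hs(S/J_{m,H_v*K_q},t)+\hs\bigl(S/((x_{iv}:i\in[m])+J_{m,(H\setminus v)*H'}),t\bigr)-\hs\bigl(S/((x_{iv}:i\in[m])+J_{m,(H_v\setminus v)*K_q}),t\bigr).
\]
The recursion $\Phi(H)=\Phi(H_v)+\Phi(H\setminus v)-\Phi(H_v\setminus v)$, with $H'$ held fixed, does not follow from this. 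The scheme also fails at its own terminal case. For $H=K_p$ and $H'$ disconnected, $K_p*H'$ is covered by neither Case 1 nor Case 2, and your recursion at $v\in V(K_p)$ is vacuous because $H_v=H$. The proposed reduction on the $H'$ side has the same defect.

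The \emph{safer alternative} in your last paragraph is the paper's actual proof, and it works. However, its content lies in identifications you do not state. For $G'=(H\sqcup\{w\})*H'$ with $H'$ not complete, $w$ is internal, and $G'\setminus w=G$, $G'_w=H(*)^qK_{q+1}$, $G'_w\setminus w=H*K_q$. These are handled by Theorem \ref{thm: hs of join of two disconnected graphs} (or, when $H'$ is connected, by the already settled mixed case), Lemma \ref{lemma: hs of join of a graph and some points of a complete graph} and Lemma \ref{lemma: hs of join of a connected and a complete graph}. In the both-connected case, the paper adds the isolated vertex to the side opposite a non-complete graph precisely so that it is internal.

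Alternatively, your own idea can be repaired, and then it gives a shorter proof than the paper's. Write $\hs(X)$ for $\hs(S_X/J_{m,X},t)$. For any graph $X$ on $p$ vertices and $u\in V(K_q)$ one has $(X*K_q)_u=K_{p+q}$. Peeling the vertices of $K_q$ off one at a time therefore gives $\hs(X*K_q)=\hs(X)+\hs(K_{p+q})-\hs(K_p)$, with no connectivity hypothesis on $X$. Substitute this for the two outer terms in the display above. The $K_{p+q-1}$ and $K_{p-1}$ contributions cancel. The exact sequence for $H$ at $v$ (trivial if $v$ is free in $H$) recombines $\hs(H_v)+\hs(H\setminus v)-\hs(H_v\setminus v)$ into $\hs(H)$. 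Induction on $p$ then finishes the proof, without needing Theorem \ref{thm: hs of join of two disconnected graphs} or Lemma \ref{lemma: hs of join of a graph and some points of a complete graph}.
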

 \begin{proof}
    The case when $H$ and $H'$ both are disconnected is proved in Theorem \ref{thm: hs of join of two disconnected graphs}. Now, assume that $H$ is connected and $H'$ is disconnected. Let $w$ be a new vertex. Set $H''=H\sqcup \{w\}$, $G'=H'*H''$, $S_{H''}=S_H[z_{iw}:i\in [m]]$ and $S'=S[z_{iw}:i\in [m]]$. Let $Q_1=\left ((x_{iw}:i\in [m])+J_{m,G'\setminus w}\right )$ and $Q_2=J_{m,G'_w}$. Since $w$ is not a free vertex of $G'$, it follows from Theorem \ref{thm: Short exact sequence} that $J_{m,G'}=Q_1\cap Q_2$. Note that $Q_1+Q_2=\left ((x_{iw}:i\in [m])+J_{m,G'_w\setminus w}\right )$, $G'_w=H(*)^qK_{q+1}$, $G'_w\setminus w=H*K_q$ and $G'\setminus w=G$. Therefore, it follows from the short exact sequence \[ 0\rightarrow{\frac{S'}{J_{m,G'}}}\rightarrow \frac{S'}{Q_1}\oplus \frac{S'}{Q_2}\rightarrow \frac{S'}{Q_1+Q_2}\rightarrow 0\] that
     $$\hs(S/J_{m,G})=\hs(S'/J_{m,G'},t)+\hs(S/J_{m,H*K_q},t)-\hs(S'/J_{m,H(*)^qK_{q+1}},t).$$
     Note that the graph $G'$ is a join of two disconnected graphs. Therefore, the assertion follows from Theorem \ref{thm: hs of join of two disconnected graphs}, Lemma \ref{lemma: hs of join of a graph and some points of a complete graph}, \ref{lemma: hs of join of a connected and a complete graph}, and Corollary \ref{cor: hs of 2-minors}. 

     Assume now that both $H$ and $H'$ are connected. In view of Lemma \ref{lemma: hs of join of a connected and a complete graph}, we may assume that $H$ is not a complete graph. Let $u$ be a new vertex. Set $H'''=H'\sqcup \{u\}$, $G''=H*H'''$, $S_{H'''}=S_H[z_{iu}:i\in [m]]$ and $S'=S[z_{iu}:i\in [m]]$. Let $Q_1=\left ((x_{iu}:i\in [m])+J_{m,G''\setminus u}\right )$ and $Q_2=J_{m,G''_u}$. Since $u$ is not a free vertex of $G'$, it follows from Theorem \ref{thm: Short exact sequence} that $J_{m,G''}=Q_1\cap Q_2$. Note that $Q_1+Q_2=\left ((x_{iu}:i\in [m])+J_{m,G''_u\setminus u}\right )$, $G''_u=H'(*)^pK_{p+1}$, $G''_u\setminus u=H'*K_p$ and $G''\setminus u=G$. The short exact sequence \[ 0\rightarrow{\frac{S'}{J_{m,G'}}}\rightarrow \frac{S'}{Q_1}\oplus \frac{S'}{Q_2}\rightarrow \frac{S'}{Q_1+Q_2}\rightarrow 0\] gives $$\hs(S/J_{m,G})=\hs(S'/J_{m,G''},t)+\hs(S/J_{m,H'*K_p},t)-\hs(S'/J_{m,H'(*)^pK_{p+1}},t).$$ Since $G''$ is a join of a connected graph $H$ and a disconnected graph $H'''$, by the previous case, we have 
       \begin{align*}
\hs(S'/J_{m,G''})=&\hs(S_H/J_{m,H},t)+\hs(S_{H'''}/J_{m,H'''},t)+\hs(S_{p+q+1}/J_{m,K_{p+q+1}},t)\\
        &-\hs(S_p/J_{m,K_{p}},t)-\hs(S_{q+1}/J_{m,K_{q+1}},t).
    \end{align*}
    Now the result follows from Lemma \ref{lemma: hs of join of a graph and some points of a complete graph}, \ref{lemma: hs of join of a connected and a complete graph} and Corollary \ref{cor: hs of 2-minors}. 
 \end{proof}
 As an immediate consequence, we have the following.
 \begin{corollary}\label{cor: dim of join of graphs}
    Let $H$ and $H'$ be two graphs on $[p]$ and $[q]$, respectively. Then 
    \begin{enumerate}
        \item $\dim(S/J_{m,H*H'})=\max\{\dim(S_H/J_{m,H}),\dim(S_{H'}/J_{m,H'}),m+p+q-1\}$.
        \item $\height(J_{m.H*H'})=\min\{mq+\height(J_{m,H}),mp+\height(J_{m,H'}),(m-1)(p+q-1)\}$.
    \end{enumerate}
\end{corollary}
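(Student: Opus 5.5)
The plan is to read both assertions directly off the Hilbert series formula of Theorem~\ref{thm: hs of join of graphs}. First I rewrite it as a sum of three rational functions. By Corollary~\ref{cor: hs of 2-minors} the bracketed term equals
\[
\hs(S_{p+q}/J_{m,K_{p+q}},t)-\hs(S_p/J_{m,K_p},t)-\hs(S_q/J_{m,K_q},t).
\]
Since the Hilbert series of a polynomial extension in $N$ variables is multiplied by $(1-t)^{-N}$, I then express everything over the full ring $S$. This gives
\[
\hs(S/J_{m,G},t)=\frac{A(t)}{(1-t)^{mq}}+\frac{B(t)}{(1-t)^{mp}}+\frac{C(t)}{(1-t)^{m+p+q-1}},
\]
where $A(t)=\hs(S_H/J_{m,H},t)-\hs(S_p/J_{m,K_p},t)$ and $B(t)$ is defined analogously for $H'$. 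The term $C(t)/(1-t)^{m+p+q-1}$ is exactly the Hilbert series of $S/J_{m,K_{p+q}}$.

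The dimension is the order of the pole at $t=1$. I would rather argue directly with the minimal primes, because the three pieces could in principle cancel. By \cite[Theorem 7]{GBI-Rauh-2013}, $\dim(S/J_{m,G})=\max_{T\in\mathscr C(G)}\{(m-1)c(T)+(p+q)-|T|\}$.

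Next I determine the cut sets of $G$. Any nonempty $T\in\mathscr C(G)$ must contain $[p]$ or $[q]$: otherwise some vertex of $H$ and some vertex of $H'$ survive, and $G\setminus T$ is connected. The set $\emptyset$ contributes $m-1+p+q$, which is the dimension of $S/J_{m,K_{p+q}}$.

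Suppose $T=[q]\sqcup T_1$. Then $G\setminus T=H\setminus T_1$, and for each $v\in T_1$ the cut condition in $G$ coincides with the cut condition in $H$. So $T_1$ is a cut set of $H$. The vertices of $[q]$ must each be cut vertices of $G\setminus(T\setminus v)$, which requires $c_H(T_1)\ge 2$. The value for such $T$ equals $(m-1)c_H(T_1)+p-|T_1|$, which is at most $\dim(S_H/J_{m,H})$.

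Conversely, I need the maximum for $H$ to be attained. If it is attained at some $T_1$ with $c_H(T_1)\ge2$, the corresponding $T$ realizes it in $G$. Otherwise the maximum comes from a prime with $c_H(T_1)=1$. I claim such a prime has value at most $m-1+p$, which is already dominated by $m+p+q-1$. To see this, note that if $c_H(T_1)=1$ and $T_1\neq\emptyset$, then a vertex of $T_1$ cannot be a cut vertex, so $T_1=\emptyset$ and the value is exactly $m-1+p$.

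The symmetric argument handles $H'$, and together these give (1). This boundary case, where the maximum for $H$ is achieved only by the empty set, is the main point requiring care. Part (2) then follows by subtraction: $\height J=\dim S-\dim$ with $\dim S=m(p+q)$, together with $\height J_{m,H}=mp-\dim(S_H/J_{m,H})$. This yields the three terms $mq+\height(J_{m,H})$, $mp+\height(J_{m,H'})$, and $m(p+q)-(m+p+q-1)=(m-1)(p+q-1)$.
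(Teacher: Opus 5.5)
Your argument is correct, but it takes a different route from the paper's. The paper gives no separate proof. It calls the corollary an immediate consequence of Theorem~\ref{thm: hs of join of graphs}, meaning that the dimension is the order of the pole at $t=1$ of
\[
\hs(S/J_{m,G},t)=\bigl[\hs(S_H/J_{m,H},t)-\hs(S_p/J_{m,K_p},t)\bigr]+\bigl[\hs(S_{H'}/J_{m,H'},t)-\hs(S_q/J_{m,K_q},t)\bigr]+\hs(S_{p+q}/J_{m,K_{p+q}},t).
\]
The cancellation you worry about is resolved by a positivity remark that the paper leaves implicit. Since $J_{m,H}\subseteq J_{m,K_p}$, one has $\dim(S_H/J_{m,H})\ge m+p-1$. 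If the inequality is strict, the first bracket has a pole of order exactly $\dim(S_H/J_{m,H})$ with leading coefficient $e_0(S_H/J_{m,H})>0$. If it is an equality, the bracket has pole order at most $m+p-1<m+p+q-1$. The last term has pole order $m+p+q-1$ with a positive leading coefficient, so the top-order coefficients can never cancel.

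You instead work directly from the decomposition in \cite{GBI-Rauh-2013}. Along the way you determine $\mathscr C(H*H')$ for arbitrary $H,H'$: the nonempty cut sets are $[q]\sqcup T_1$ with $T_1\in\mathscr C(H)$ and $c_H(T_1)\ge 2$, together with the symmetric family. This extends Proposition~\ref{prop: cut sets of join of two disconnected graphs} and Lemma~\ref{lemma: cut sets of join of a disconnected graph and a complete graph}. Your route is independent of the whole chain of lemmas behind Theorem~\ref{thm: hs of join of graphs}, and it handles the boundary case (maximum for $H$ attained only at $T_1=\emptyset$) cleanly. Combined with Lemma~\ref{lemma: multiplicity of arbitrary ideal}, it would also identify the top-dimensional components. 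The paper's route costs nothing once the Hilbert series is known, and it gives $e_0$ at the same time, which is what the paper later exploits for multifan and wheel graphs.

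One correction: your displayed rewriting with denominators $(1-t)^{mq}$ and $(1-t)^{mp}$ is wrong. In Theorem~\ref{thm: hs of join of graphs}, the term $\hs(S_H/J_{m,H},t)$ arises as $\hs\bigl(S/((y_{ik}:i\in[m],k\in[q])+J_{m,H}),t\bigr)$. There the $y$-variables are killed, not adjoined freely, so no extra factor $(1-t)^{-mq}$ appears. With your factors the pole order would be $\dim(S_H/J_{m,H})+mq$ (whenever $A(t)\neq 0$), which contradicts the statement. The display should read $\hs(S/J_{m,G},t)=A(t)+B(t)+C(t)/(1-t)^{m+p+q-1}$. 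You never use it, because you switch to the prime-based argument, so it does no harm; but delete or fix it.
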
 
Recall that given an integer $r\geq 2$, a graph \(G\) is called an \(r\)-\emph{partite} graph if there exists a partition \(V(G)=V_1\cup\cdots\cup V_r\) such that, for all \(1\leq k\leq r\) and \(i,j\in V_k\), the vertices \(i\) and \(j\) are not adjacent. If, for \(k\neq l\), every vertex of \(V_k\) is adjacent to every vertex of \(V_l\), then we say that \(G\) is a \emph{complete \(r\)-partite} graph, denoted by \(K_{p_1,\ldots,p_r}\), where \(p_k=|V_k|\).
\vspace{2mm}

Now, we compute the Hilbert series of complete $r$-partite graphs, which was computed in \cite{shen-Zhu-2023-GBEI-of-complete-r-partite-graphs}. Our proof uses Theorem \ref{thm: hs of join of graphs}, which provides a simpler and more concise proof. We also compute the dimension and multiplicity of complete $r$-partite graphs.
\begin{corollary}
    Let $G=K_{p_1,\ldots,p_r}$ be the complete $r$-partite graph on the vertex set $[n]=[p_1]\sqcup \cdots\sqcup [p_r]$ with $p_1=p_2=\cdots =p_t>p_{t+1}\geq \cdots\geq p_r$. Then
    $$\hs(S/J_{m,G},t)=\sum_{\substack{j\in[r]}}\left[\frac{1}{(1-t)^{mp_j}}-\frac{\sum_{i\geq 0}\binom{m-1}{i}\binom{p_j-1}{i}t^i}{(1-t)^{m+p_j-1}}\right]+\frac{\sum_{i\geq0}\binom{m-1}{i}\binom{n-1}{i}t^i}{(1-t)^{m+n-1}}.$$ In particular, $\dim(S/J_{m,G})=\max\{mp_1,m+n-1\}$ and 
    $$e_0({S/J_{m,G}})=\begin{cases}
        \binom{m+n-2}{m-1}, \text{ if $mp_1<m+n-1$ ,}\\
       \quad t, \quad\quad\text{ if $mp_1>m+n-1$ ,}\\
        t+\binom{m+n-2}{m-1},\text{ if $mp_1=m+n-1$.}
    \end{cases}$$
\end{corollary}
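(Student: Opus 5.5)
The plan is induction on $r$, applying Theorem~\ref{thm: hs of join of graphs} to the decomposition $K_{p_1,\ldots,p_r}=K_{p_1,\ldots,p_{r-1}}*K^c_{p_r}$. First I compute the Hilbert series of the edgeless graph $K^c_{p}$. There $J_{m,K_p^c}=0$, so $\hs(S_{K_p^c}/J_{m,K_p^c},t)=1/(1-t)^{mp}$. I also rewrite the correction term of Theorem~\ref{thm: hs of join of graphs} using Corollary~\ref{cor: hs of 2-minors}. Writing $F_n(t)=\hs(S_n/J_{m,K_n},t)$, the factor $(1-t)^q$ cancels against the denominator, so the correction term equals $F_{p+q}-F_p-F_q$. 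Hence Theorem~\ref{thm: hs of join of graphs} reads
$$\hs(S/J_{m,H*H'})=\bigl(\hs(S_H/J_{m,H})-F_p\bigr)+\bigl(\hs(S_{H'}/J_{m,H'})-F_q\bigr)+F_{p+q}.$$

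Put $A(H)=\hs(S_H/J_{m,H})-F_{|V(H)|}$. The identity above says that $A$ is additive under joins: $A(H*H')=A(H)+A(H')$. For $r=1$ we have $A(K_p^c)=1/(1-t)^{mp}-F_p$. By induction, $A(K_{p_1,\ldots,p_r})=\sum_j\bigl[1/(1-t)^{mp_j}-F_{p_j}\bigr]$, and adding back $F_n$ gives the stated formula. For $r=2$ this is consistent with Theorem~\ref{thm: hs of join of two disconnected graphs} when $p_j\ge2$. Since Theorem~\ref{thm: hs of join of graphs} holds for arbitrary graphs, the cases $p_j=1$ need no separate treatment, though there $1/(1-t)^m-F_1=0$ anyway.

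For the dimension and multiplicity I look at the pole orders. The term $F_{p_j}$ has pole order $m+p_j-1\le m+n-1$. The term $1/(1-t)^{mp_j}$ has pole order $mp_j$, which is maximal exactly for $j\le t$. The term $F_n$ has pole order $m+n-1$ with numerator value at $1$ equal to $\sum_i\binom{m-1}{i}\binom{n-1}{i}=\binom{m+n-2}{m-1}$ by Vandermonde. If $mp_1>m+n-1$, the top pole comes from the $t$ terms $1/(1-t)^{mp_1}$, and $e_0=t$. If $mp_1<m+n-1$, only $F_n$ attains the top order, since $m+p_j-1<m+n-1$ whenever $r\ge2$; thus $e_0=\binom{m+n-2}{m-1}$. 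In the equal case the two contributions add. The leading coefficients are positive, so there is no cancellation, and $\dim=\max\{mp_1,m+n-1\}$.

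The main care point is the pole-order bookkeeping. I must check that the negative terms $-F_{p_j}$ never reach order $m+n-1$; they don't, because $p_j<n$. I must also check, in the case $mp_1=m+n-1$, that no $-F_{p_j}$ has order $mp_1$. Here $m+p_j-1<mp_1$ holds unless $m=1$ or $p_j=p_1=1$. The latter forces $n=1$, and both degenerate cases are excluded by $r\ge2$ and $m\ge2$.
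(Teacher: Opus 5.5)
Your proposal is correct and follows the paper's route: write $G=K_{p_1,\ldots,p_{r-1}}*K^c_{p_r}$, apply Theorem~\ref{thm: hs of join of graphs} recursively, and read off the dimension and $e_0$ by comparing pole orders, using $p_j<n$ and $e_0(S/J_{m,K_n})=\binom{m+n-2}{m-1}$. Your observation that the correction term equals $F_{p+q}-F_p-F_q$ makes $\hs(S_H/J_{m,H},t)-F_{|V(H)|}$ additive under joins; this is a cleaner way to carry out the recursion the paper leaves implicit, not a different argument.
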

\begin{proof}
    Note that $G=(\cdots(K_{p_1}^c*K_{p_2}^c)*\cdots)*K_{p_r}^c$. Now the assertion follows from Theorem \ref{thm: hs of join of graphs} applied recursively. Since $p_j<n$ for each $j\in[r]$, we get that $$\dim(S/J_{m,G})=\max\{mp_1,m+n-1\}.$$ Now, the rest follows from the fact that $e_0(S/J_{m,K_n})=\binom{m+n-2}{m-1}$.
\end{proof}
\section{Multiplicity of generalized binomial edge ideals}\label{sec: multiplicity}
In this Section, we focus on the multiplicity of generalized binomial edge ideals. We provide a condition on a vertex that ensures some of the Hilbert coefficients remain unchanged upon its removal. Recall that the minimal primes of $J_{m, G}$ are of the form $P_{T}(m,G)$, where $T\subset[n]$ is a cut set. The height of $P_T(m,G)$ is $$\height(P_{T}(m,G))=(m-1)(n-c(T))+|T|,$$ 
and $$\dim(S/J_{m,G})=\max\left\{(m-1)c(T)+n-|T|\right\}.$$
We now recall the definition of the free clique degree of a vertex.
\begin{definition}\cite[Definition 3.2]{KKP-Hilbert-coefficients-regularity-BEI}
    Let $v \in V(G)$, and let $C$ be a maximal clique containing $v$. We call $C$ a \emph{free clique} of $v$ if either $G = C$, or every vertex of $C$ other than $v$ is a free vertex. The \emph{free clique degree} of $v$, denoted by $\fcdeg_G(v)$, is the number of distinct free cliques of $v$.

    The following result is a generalization of \cite[Theorem 3.3]{KKP-Hilbert-coefficients-regularity-BEI}, and it provides a sufficient condition on a vertex so that after removing the vertex, some of the Hilbert coefficients remain the same.
\end{definition}
 \begin{theorem}\label{thm: fcdeg condition to remove vertex keeping Hilbert coefficients same}
    Let $G$ a graph and $v\in \iv(G)$ such that $\fcdeg(v)\geq \lceil\frac{t+2}{m-1}\rceil+1$ for some $t\geq 0$. Then $$e_j(S/J_{m,G})=e_j(S/J_{m,G\setminus v}),$$ for all $0\leq j\leq t.$
\end{theorem}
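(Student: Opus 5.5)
The plan is to use the decomposition of Theorem~\ref{thm: Short exact sequence} at the internal vertex $v$ and to show that every term other than the one coming from $G\setminus v$ has dimension at least $t+2$ smaller.

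\emph{Step 1: the Hilbert series identity.} By Theorem~\ref{thm: Short exact sequence}, $J_{m,G}=J_{m,G_v}\cap\left((x_{iv}:i\in[m])+J_{m,G\setminus v}\right)$. Moreover, $J_{m,G_v}+(x_{iv}:i\in[m])+J_{m,G\setminus v}=(x_{iv}:i\in[m])+J_{m,G_v\setminus v}$. The usual short exact sequence then gives
\[
\hs(S/J_{m,G},t)=\hs(S_{G\setminus v}/J_{m,G\setminus v},t)+\hs(S/J_{m,G_v},t)-\hs(S_{G_v\setminus v}/J_{m,G_v\setminus v},t),
\]
where the first and last rings omit the $m$ variables $x_{iv}$. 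Set $d=\dim(S_{G\setminus v}/J_{m,G\setminus v})$. Suppose the other two summands have Krull dimension at most $d-t-1$. Writing everything over $(1-t)^d$, their contributions to the numerator are divisible by $(1-t)^{t+1}$. Hence they do not affect $Q^{(j)}(1)$ for $j\le t$. The numerator is therefore unchanged to order $t$ at $t=1$; in particular $Q(1)$ is unchanged and positive. So $\dim S/J_{m,G}=d$, and $e_j(S/J_{m,G})=e_j(S/J_{m,G\setminus v})$ for $0\le j\le t$.

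\emph{Step 2: the dimension bound for $G_v$.} Let $T'\in\mathscr C(G_v)$ realize $\dim S/J_{m,G_v}=(m-1)c_{G_v}(T')+n-|T'|$. In $G_v$ the vertex $v$ is free, since $N[v]$ is a clique. Let $C_1,\dots,C_k$ be the free cliques of $v$ in $G$, where $k=\fcdeg(v)$.
\begin{itemize}
\item Each $C_i\setminus\{v\}$ is nonempty, since $v$ is internal.
\item Each vertex of $C_i\setminus\{v\}$ is free in $G$, with neighbourhood inside $C_i\subseteq N[v]$. It therefore stays free in $G_v$.
\item Free vertices lie in no cut set. Hence $v\notin T'$ and $T'\cap C_i=\emptyset$.
\end{itemize}
Now consider $T'$ as a vertex subset of $G\setminus v$. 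Removing $v$ from $G$ separates the sets $C_i\setminus\{v\}$ from each other and from the rest, because their vertices have no neighbours outside $C_i$. In $G_v\setminus T'$, however, they all lie in the single component containing $v$. Passing from $G_v$ to $G$ only deletes edges inside $N[v]$, and then we delete $v$. So
\[
c_{G\setminus v}(T')\ge c_{G_v}(T')+k-1 .
\]
Since $P_{T'}(m,G\setminus v)\supseteq J_{m,G\setminus v}$ is prime (even if $T'$ is not a cut set of $G\setminus v$), we get $d\ge (m-1)c_{G\setminus v}(T')+(n-1)-|T'|$. Therefore
\[
d-\dim S/J_{m,G_v}\ge (m-1)(k-1)-1\ge t+1,
\]
because $k-1\ge\lceil\frac{t+2}{m-1}\rceil$.

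\emph{Step 3: the bound for $G_v\setminus v$.} Since $v$ is a free vertex of $G_v$, every cut set of $G_v\setminus v$ is a cut set of $G_v$ with the same number of components. Hence $\dim S_{G_v\setminus v}/J_{m,G_v\setminus v}\le\dim S/J_{m,G_v}-1$, and Step 2 applies.

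The main obstacle I expect is the component count in Step 2. One must check carefully that the free cliques avoid $T'$ and become genuinely distinct components after deleting $v$. The edge case $G=C_i$ in the definition of free clique is excluded, because $v$ is internal.
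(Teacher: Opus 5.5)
Your proof is correct and follows the paper's route. You use the decomposition of Theorem~\ref{thm: Short exact sequence} at $v$, take an optimal cut set of $G_v$ (which must avoid $v$), and prove $c_{G\setminus v}(T')\ge c_{G_v}(T')+\fcdeg(v)-1$, giving a gap of at least $(m-1)(k-1)-1\ge t+1$. The only differences are that you prove inline both the Hilbert-series comparison the paper cites as \cite[Lemma 3.1]{KKP-Hilbert-coefficients-regularity-BEI} and the component count it takes from the proof of \cite[Theorem 3.3]{KKP-Hilbert-coefficients-regularity-BEI}; also, your Step 3 can be replaced by the containment $(x_{iv}:i\in[m])+J_{m,G_v\setminus v}\supseteq J_{m,G_v}$, which gives the needed dimension bound at once.
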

\begin{proof}
   Since $v$ is an internal vertex, it follows from Theorem \ref{thm: Short exact sequence} that $$J_{m,G}=J_{m,{G_v}}\cap \langle\{ x_{iv}:i\in [m]\}+J_{m,{G\setminus v}}\rangle.$$ We claim that $\h(J_{m,G_v})>t+\h(\langle\{ x_{iv}:i\in [m]\}+J_{m,,{G\setminus v}}\rangle).$ Assuming the claim, the result follows from \cite[Lemma 3.1]{KKP-Hilbert-coefficients-regularity-BEI}.
   
   Let $k=\lceil\frac{t+2}{m-1}\rceil+1$ and $\height(J_{m,G_v})=\height(P_{W}(m,G_v))=(m-1)(n-c_{G_v}(W))+|W|$, for some cut set $W$ of $G_v$. Since the induced subgraph on $N_{G_v}[v]$ is a complete graph, it follows that $v\notin W$.  Set $T=W\cup \{v\}$. Then from the proof of \cite[Theorem 3.3]{KKP-Hilbert-coefficients-regularity-BEI}, we have $c_G(T)\geq C_{G_v}(W)+k-1$. 
   Now, 
    \begin{align*}
            \height\left (P_T(m,G)\right ) & = (m-1)(n-c_G(T))+|T| \qquad && \text{}\\
            & = (m-1)(n-c_G(T))+|W|+1.
     \end{align*}
Since $c_{G}(W\cup \{v\})\geq c_{G_v}(W)+k-1$, we have 
     \begin{alignat*}{3}
          \height\left (P_T(m,G)\right )  &\leq (m-1)(n-c_{G_v}(W)-k+1)+|W|+1  \quad && \\
            & = (m-1)(n-c_{G_v}(W))+|W|-(m-1)(k-1)+1\qquad && \\
            & =\height(J_{m,G_v})-(m-1)k+m.
        \end{alignat*}
On the other hand, since $P_T(m,G)=\left\langle x_{iv}:i\in [m] \right \rangle+ P_{T\setminus v}(m,G\setminus v)$, we have $$\height\left (P_T(m,G)\right )= m+ \height\left (P_{T\setminus v}(m,G\setminus v)\right).$$ Observe that $J_{m,G\setminus v}\subset P_{T\setminus v}(G\setminus v)$. Hence,  $\height\left (J_{m,G\setminus{v}}\right ) \leq \height\left (P_{T\setminus v}(m,G\setminus v)\right)$.
Therefore, we get 
$\height(J_{m,G\setminus{v}})\leq \height(P_{T}(m,G))-m=\height(J_{m,G_v}) -(m-1)k.$ This yields
\begin{align*}
    \height(J_{m,G_v})&\geq \height(J_{m,G\setminus{v}})+(m-1)k\\
    &\geq \height(J_{m,G\setminus{v}})+(m-1)\left\lceil\frac{t+2}{m-1}\right\rceil+m-1\\
    &=\h(\langle\{ x_{iv}:i\in [m]\}+J_{m,,{G\setminus v}}\rangle)+(m-1)\left\lceil\frac{t+2}{m-1}\right\rceil-1.
\end{align*}
Note that  $$b\left\lceil\frac{a}{b}\right\rceil=\begin{cases}
    a, \qquad\quad\quad\text{ if $b$ divides $a$},\\
    a+b-r, ~\text{ if $a=bq+r,0<r<b$}.
\end{cases}$$
Therefore, we have $\h(J_{K_m,G_v})>t+\h(\langle\{ x_{iv}:i\in [m]\}+J_{K_m,,{G\setminus v}}\rangle)$. This proves the claim and hence the result.
\end{proof}
As a consequence of Theorem \ref{thm: fcdeg condition to remove vertex keeping Hilbert coefficients same}, we obtain the multiplicity of doubly whiskered graphs. For a graph \(G\), the \emph{doubly whiskered graph} of \(G\), denoted \(W^2(G)\), is the graph obtained by attaching two whiskers to each vertex of \(G\).

\begin{corollary}
    Let $W^2(G)$ be the doubly whiskered graph of $G$. Then $e_j(S/J_{m,W^2(G)})=1$ for $0\leq j\leq m-3$. In particular, $e_0(S/J_{m,W^2(G)})=1$
\end{corollary}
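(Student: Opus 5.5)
The plan is to apply Theorem~\ref{thm: fcdeg condition to remove vertex keeping Hilbert coefficients same} repeatedly, deleting the original vertices of $G$ one at a time, until only the whisker leaves remain. Throughout, take $t=m-3$. Note that $t\ge 0$ requires $m\ge 3$; for $m=2$ the range $0\le j\le m-3$ is empty and the ``in particular'' part needs a separate argument. With this choice,
\[
\left\lceil\tfrac{t+2}{m-1}\right\rceil+1=\left\lceil\tfrac{m-1}{m-1}\right\rceil+1=2 .
\]
So it suffices to know that, at every stage of the deletion, the vertex being removed is internal and has free clique degree at least $2$.

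Write $V(G)=\{v_1,\dots,v_n\}$. In $W^2(G)$, each $v_k$ has two attached whisker leaves $a_k,b_k$. Let $G^{(0)}=W^2(G)$ and $G^{(k)}=G^{(k-1)}\setminus v_k$. I will check the following by induction on $k$. In $G^{(k-1)}$, the vertex $v_k$ still has both whiskers. The edges $\{v_k,a_k\}$ and $\{v_k,b_k\}$ are maximal cliques, because $a_k$ and $b_k$ have degree one. Since the leaves $a_k,b_k$ are free vertices, both of these cliques are free cliques of $v_k$, so $\fcdeg_{G^{(k-1)}}(v_k)\ge 2$. For the same reason $\cdeg(v_k)\ge 2$, so $v_k\in\iv(G^{(k-1)})$. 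The key point is that deleting $v_1,\dots,v_{k-1}$ never touches the whiskers of $v_k$, so the hypothesis persists at every stage. The theorem then gives
\[
e_j(S/J_{m,G^{(k-1)}})=e_j(S/J_{m,G^{(k)}})\quad\text{for } 0\le j\le m-3,
\]
where each ideal is taken in its own polynomial ring.

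After $n$ steps, $G^{(n)}$ is the graph on the $2n$ leaves $a_k,b_k$ with no edges. Hence $J_{m,G^{(n)}}=0$, and the quotient is a polynomial ring in $2nm$ variables, with Hilbert series $1/(1-t)^{2nm}$. So $Q(t)=1$ and $d=2nm$. Since $Q(1)=1$, the multiplicity statement $e_0(S/J_{m,W^2(G)})=1$ follows from the chain of equalities. The higher coefficients are read off as $e_j=Q^{(j)}(1)/j!$ in the notation of the Preliminaries.

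I expect this final read-off to be the delicate step. The formula $e_j=Q^{(j)}(1)/j!$ applied to $Q\equiv 1$ gives $e_j=0$ for $j\ge 1$, so the value asserted for $1\le j\le m-3$ depends on the normalization of the Hilbert coefficients. I would check this carefully against the convention fixed in the Preliminaries. The combinatorial verifications of the hypotheses at each deletion are routine. Note also that the dimension stays constant along the chain, so comparing $e_j$ of successive graphs is meaningful. This is implicit in \cite[Lemma 3.1]{KKP-Hilbert-coefficients-regularity-BEI}, through the height inequality established in the proof of Theorem~\ref{thm: fcdeg condition to remove vertex keeping Hilbert coefficients same}.
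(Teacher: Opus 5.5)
Your reduction is the paper's argument. With $t=m-3$ the threshold $\lceil\frac{t+2}{m-1}\rceil+1$ equals $2$, and Theorem~\ref{thm: fcdeg condition to remove vertex keeping Hilbert coefficients same} is applied to the original vertices one at a time. You also check that each $v_k$ still carries both whisker leaves in the intermediate graph $G^{(k-1)}$, so it is internal there with $\fcdeg\ge 2$. That check is correct, and it is something the paper leaves implicit in the word ``recursively''.

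The problem is the last step, and your suspicion there is right. It should be stated as a conclusion rather than deferred to a normalization check. The Preliminaries fix $e_i=Q^{(i)}(1)/i!$, and the terminal ring is a polynomial ring with $Q(t)=1$. So the chain of equalities yields $e_0(S/J_{m,W^2(G)})=1$ and $e_j(S/J_{m,W^2(G)})=0$ for $1\le j\le m-3$. There is no normalization ambiguity: under this convention, and under the usual Hilbert-polynomial one, a polynomial ring has $e_j=0$ for all $j\ge 1$. Hence the statement as written is false for $1\le j\le m-3$ as soon as $m\ge 4$. The paper's proof never evaluates the terminal graph, so it misses this.

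A direct check: take $m=4$ and $G=K_1$, so $W^2(G)=P_3$. Then $J_{4,P_3}=I_2(X)\cap(x_{12},x_{22},x_{32},x_{42})$ with $X$ the generic $4\times 3$ matrix, and
\[
\hs(S/J_{4,P_3},t)=\frac{1}{(1-t)^8}+\frac{1+6t+3t^2}{(1-t)^6}-\frac{1+3t}{(1-t)^5}=\frac{1+(1-t)^2(4t+6t^2)}{(1-t)^8}.
\]
This gives $e_0=1$ and $e_1=0$, while $e_2=10$ lies outside the range the theorem guarantees. So your argument proves, and the true statement is, $e_0=1$ and $e_j=0$ for $1\le j\le m-3$.

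One smaller correction concerns $m=2$. The ``in particular'' part does not need a separate argument there; it is false. Indeed $J_{2,P_3}$ is a complete intersection of two quadrics, so $e_0(S/J_{2,W^2(K_1)})=4$. The corollary must be read with $m\ge 3$, which is exactly the range where $t=m-3\ge 0$ makes the theorem applicable.
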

\begin{proof}
    For $t=m-3$, the number $\left\lceil\frac{t+2}{m-1}\right\rceil+1$ is $2$. Notice that every vertex $v\in V(G)$ satisfies $\fcdeg_{W^2(G)}(v)\geq 2$. Therefore, by applying Theorem \ref{thm: fcdeg condition to remove vertex keeping Hilbert coefficients same} recursively, the desired result follows.
\end{proof}
The following example illustrates Theorem \ref{thm: fcdeg condition to remove vertex keeping Hilbert coefficients same}.
\begin{example}
    Let $G$ be the graph shown in the Figure \ref{fig:delete of a vertex}. Then $\fcdeg_G(v)=3$ and $\fcdeg_G(w)=1$. Using Theorem \ref{thm: fcdeg condition to remove vertex keeping Hilbert coefficients same}, we obtain $e_j(S/J_{m,G})=e_j(S/J_{m,G\setminus v})$ for $0\leq j\leq 2m-4$. Further, notice that $\fcdeg_{G\setminus v}(w)=2$. Therefore, by Theorem \ref{thm: fcdeg condition to remove vertex keeping Hilbert coefficients same}, we have $e_j(S/J_{m,G\setminus v})=e_j(S/J_{m,G\setminus\{v,w\}})$ for $0\leq j\leq m-3$. In particular, if $m\geq 3$, then $e_0(S/J_{m,G})=m^3$. 
\end{example}
\begin{figure}[h]
    \centering
\begin{tikzpicture}[scale= 0.7, line cap=round,line join=round]
\draw (0,0)--(-1,1);
\draw (0,0)--(-1,-1);
\draw (-1,-1)--(-1,1);

\draw (0,0)--(1,1);
\draw (1,1)--(2,0);
\draw (2,0)--(1,-1);
\draw (1,-1)--(0,0);

\draw (0,0)--(2,0);
\draw (2,0)--(3.2,0);
\draw (1,1)--(1,-1);

\draw (0,0)--(0,1);

\draw (0,0)--(-0.58,-1.54);
\draw (0,0)--(0.78,-1.5);
\draw (-0.58,-1.54)--(0.78,-1.5);
\fill (0,0) circle (2pt);
\fill (-1,1) circle (2pt);
\fill (-1,-1) circle (2pt);
\fill (1,1) circle (2pt);
\fill (2,0) circle (2pt);
\fill (1,-1) circle (2pt);
\fill (0,1) circle (2pt);
\fill (-0.58,-1.54) circle (2pt);
\fill (0.78,-1.5) circle (2pt);
\fill (3.2,0) circle (2pt);
\node[above right] at (-0.15,0) {$v$};
\node[above right] at (1.8,0) {$w$};
\draw (7,-1)--(7,1);

\draw (9,1)--(10,0);
\draw (10,0)--(9,-1);

\draw (10,0)--(11.2,0);
\draw (9,1)--(9,-1);

\draw (7.42,-1.54)--(8.78,-1.5);
\fill (7,1) circle (2pt);
\fill (7,-1) circle (2pt);
\fill (9,1) circle (2pt);
\fill (10,0) circle (2pt);
\fill (9,-1) circle (2pt);
\fill (7.42,-1.54) circle (2pt);
\fill (8.78,-1.5) circle (2pt);
\fill (11.2,0) circle (2pt);
\fill (8,1) circle (2pt);
\node[above right] at (9.8,0) {$w$};
\node at (0,-2.3) {$G$};
\node at (8.4,-2.3) {$G\setminus v$};
\end{tikzpicture}
    \caption{}\label{fig:delete of a vertex}
\end{figure}
\section{Applications}\label{sec: Applications}
In this Section, we compute the dimension and multiplicity of generalized binomial edge ideals of multifan and wheel graphs. The case \(m=2\) has already been studied in \cite{HS-of-BEI-Arvind-Sarkar-2019}. Thus, throughout this section, we restrict our attention to \(m\geq 3\).
\subsection{Dimension and multiplicity of multifan graph}
\noindent
\vspace{2mm}

Let $P_{n_1},\ldots,P_{n_r}$ be paths on vertices $n_1,\ldots,n_r$ respectively. Then the graph $\{v\}*(\sqcup_{i=1}^rP_{n_i})$ is called a \emph{multifan} graph, denoted by $M_{n_1,\ldots,n_r}$. To determine the dimension and multiplicity of a multifan graph, we first need to establish the dimension and multiplicity for the path graph.

\begin{lemma}\label{lemma: dimension and multiplicity of path graphs}
    Let $G=P_n$ be the path graph on $n$ vertices. Then
    \begin{enumerate}
        \item \label{lemma: dimension of path graphs} $$ \dim(S/J_{m,G})=
    \begin{cases}
        \lceil\frac{n}{2}\rceil m+1, \text{ if $n$ is even},\\
        \lceil\frac{n}{2}\rceil m,\quad\quad\text{ if $n$ is odd}.
    \end{cases}$$
    \item \label{lemma: multiplicity of path graphs} $$ e_0(S/J_{m,G})=
    \begin{cases}
        \dfrac{mn}{2}, \text{ if $n$ is even},\\
       1,\quad\text{  if $n$ is odd}.
    \end{cases}$$
    \end{enumerate}
\end{lemma}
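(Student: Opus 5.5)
Throughout, $m\geq 3$, as fixed at the start of this section. Both parts follow from the description of the minimal primes $P_T(m,G)$, $T\in\mathscr C(G)$, together with Lemma \ref{lemma: multiplicity of arbitrary ideal}.

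\emph{Step 1: the cut sets of $P_n$.} Label the path $1-2-\cdots-n$. I would check that $T\in\mathscr C(P_n)$ if and only if $T$ contains neither endpoint $1$ nor $n$ and no two vertices of $T$ are adjacent.
\begin{itemize}
\item An endpoint is never a cut vertex of any induced subgraph containing it.
\item If $i,i+1\in T$, then removing $i$ from $G\setminus(T\setminus\{i\})$ only deletes an isolated-or-end vertex of a component, so the number of components does not increase.
\item Conversely, each vertex of such an independent set of internal vertices separates its two neighbours, which are not in $T$.
\end{itemize}
For such $T$ the components of $P_n\setminus T$ are the $|T|+1$ nonempty intervals between consecutive elements of $T$, so $c(T)=|T|+1$.

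\emph{Step 2: the dimension.} Substituting $c(T)=|T|+1$ into $\dim(S/J_{m,G})=\max\{(m-1)c(T)+n-|T|\}$ gives
\[
(m-1)c(T)+n-|T|=(m-2)|T|+m-1+n.
\]
Since $m\geq 3$, this is strictly increasing in $|T|$. So the maximum is attained exactly at the cut sets of maximal size, namely the maximum independent sets of the path $2-\cdots-(n-1)$ on $n-2$ vertices. These have size $\lceil\frac{n-2}{2}\rceil$.
\begin{itemize}
\item For $n$ even, $|T|=\frac{n-2}{2}$ and the value is $\frac{mn}{2}+1$.
\item For $n$ odd, $|T|=\frac{n-1}{2}$ and the value is $\frac{(n+1)m}{2}=\lceil\frac n2\rceil m$.
\end{itemize}

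\emph{Step 3: the multiplicity.} Since $J_{m,G}$ is radical, $J_{m,G}=\bigcap_{T\in\mathscr C(G)}P_T(m,G)$ is an irredundant primary decomposition with distinct primes. By Lemma \ref{lemma: multiplicity of arbitrary ideal}, $e_0(S/J_{m,G})$ is the sum of $e_0(S/P_T(m,G))$ over the $T$ of maximal size found in Step 2.

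For each such $T$, the quotient $S/P_T(m,G)$ is the tensor product over the components $G_i$ of $S_{|V(G_i)|}/J_{m,K_{|V(G_i)|}}$, after killing the variables indexed by $T$. Hence $e_0$ is multiplicative, and by Corollary \ref{cor: hs of 2-minors}
\[
e_0(S/P_T(m,G))=\prod_i\binom{m+|V(G_i)|-2}{m-1}.
\]
\begin{itemize}
\item For $n$ odd, the unique maximal cut set is $T=\{2,4,\ldots,n-1\}$. All components are single vertices, so every factor is $1$ and $e_0=1$.
\item For $n$ even, a maximum independent set of size $k=\frac{n-2}{2}$ in a path on $2k$ vertices is determined by the single place where two consecutive non-chosen vertices occur, so there are $k+1=\frac n2$ of them. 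Each leaves exactly one component that is an edge, contributing $\binom{m}{m-1}=m$, and all other components are singletons. Thus $e_0=\frac n2\cdot m=\frac{mn}{2}$.
\end{itemize}

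The main care point is Step 1, the exact characterization of cut sets, together with the counting in the even case. It is also essential that $m\geq 3$: it makes the maximum in Step 2 occur only at maximal $|T|$. For $m=2$ all cut sets give the same dimension, which is why the formula differs in that case.
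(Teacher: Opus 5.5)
Your proposal is correct and follows essentially the same route as the paper. The paper uses $c(T)=|T|+1$ for every cut set, so for $m\geq 3$ the dimension is attained exactly at the cut sets of maximal size $\lfloor\frac{n-1}{2}\rfloor$; Lemma~\ref{lemma: multiplicity of arbitrary ideal} then reduces $e_0$ to summing $e_0(S/P_T(m,G))$ over these cut sets, giving $1$ for $n$ odd and $\frac{n}{2}\cdot m$ for $n$ even. You additionally justify the cut-set characterization and the count of $\frac{n}{2}$ maximal cut sets, which the paper only asserts; your ``two consecutive non-chosen vertices'' description is accurate only when read in all of $P_n$ with the endpoints included, where it is exactly the paper's bijection with the edge components $\{2i-1,2i\}$.
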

\begin{proof}
\begin{enumerate}
    \item  Let $T\subset[n]$ be a cut set of $G=P_n$. Then observe that $c(T)=|T|+1,$ and hence $\dim(S/J_{m,P_n})=\max\left\{m+n-1+(m-2)|T|: T\in \mathscr{C}(G)\right\}.$ This number is maximum when $T$ is of maximum cardinality. Note that the maximum cardinality of a cut set in $G$ is $\lfloor \frac{n-1}{2}\rfloor$. After simplifying the expression, we obtain the desired result.
    \item  As observed in the proof of part $(1)$, the dimension of $S/J_{m,G}$ is attained for the cut sets of cardinality $\lfloor \frac{n-1}{2}\rfloor$.  Then it follows from \cite[Theorem 7]{GBI-Rauh-2013} and Lemma \ref{lemma: multiplicity of arbitrary ideal} that $$e_0(S/J_{m,G})=\sum_{\substack{T\in \mathscr{C}(P_n),\\|T|=\lfloor \frac{n-1}{2}\rfloor}}e_0(P_T(m,G)).$$
 If $n$ is odd, then $T=\{2,4,\ldots,n-1\}$ is the cut set of cardinality $\lfloor \frac{n-1}{2}\rfloor$. Note that $P_{T}(m,G)$ is generated by variables. Therefore, we have $e_0(S/J_{m,G})=1$. 

Now, suppose $n$ is even. In this case, for every cut set $T$ of cardinality $\lfloor \frac{n-1}{2}\rfloor$, there is a unique edge from $\{\{2i-1,2i\}:1\leq i\leq\frac{n}{2} \}$ as a component of $P_n\setminus T$, and conversely. Therefore, there are $\frac{n}{2}$ cut sets of cardinality $\lfloor \frac{n-1}{2}\rfloor$. Note that the Hilbert series of $P_T(m,G)$ with $|T|=\lfloor \frac{n-1}{2}\rfloor$ is $\frac{1+(m-1)t}{(1-t)^{\frac{mn}{2}+1}}$. Therefore, we have $e_0(S/J_{m,G})=\frac{mn}{2}$. This completes the proof.
\end{enumerate}
\end{proof}

    We are now ready to compute the multiplicity of multifan graphs.

\begin{theorem}\label{thm: hs of multifan graph}
    Let $G=M_{n_1,\ldots,n_r}$, $r\geq 2$ be a multifan graph and $n=\sum_{i=1}^rn_i$. Then $$\dim(S/J_{m,G})=\sum_{i=1}^r\dim(S/J_{m,P_{n_i}})\text{ and }e_0(S/J_{m,G})=\prod_{n_i=\text{ even }}\dfrac{mn_i}{2}.$$
\end{theorem}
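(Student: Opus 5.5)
The plan is to split $J_{m,G}$ at the apex $v$ using Theorem~\ref{thm: Short exact sequence}, and then show that the component coming from $G_v$ has too small a dimension to affect either $\dim$ or $e_0$.

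Since $r\geq 2$, the vertex $v$ lies in at least two maximal cliques: one meets $P_{n_1}$ and another meets $P_{n_2}$, and there are no edges between different paths. So $v\in\iv(G)$, and Theorem~\ref{thm: Short exact sequence} gives
\[
J_{m,G}=J_{m,G_v}\cap\bigl((x_{iv}:i\in[m])+J_{m,G\setminus v}\bigr).
\]
Here $G_v=K_{n+1}$, since $N_G[v]=V(G)$. Also $G\setminus v=\sqcup_{i=1}^r P_{n_i}$, and adding the two ideals gives $(x_{iv}:i\in[m])+J_{m,K_n}$. Set $Q=(x_{iv}:i\in[m])+J_{m,G\setminus v}$. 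The associated short exact sequence yields
\[
\hs(S/J_{m,G},t)=\hs(S/Q,t)+\hs(S_{n+1}/J_{m,K_{n+1}},t)-\hs(S_n/J_{m,K_n},t).
\]
The Hilbert series of $S/Q$ is the Hilbert series of the quotient by the disjoint union of paths. The ideals of the different paths live in disjoint sets of variables, so this series is $\prod_{i=1}^r\hs(S_{P_{n_i}}/J_{m,P_{n_i}},t)$. Consequently $\dim(S/Q)=\sum_i\dim(S/J_{m,P_{n_i}})$ and $e_0(S/Q)=\prod_i e_0(S/J_{m,P_{n_i}})$.

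The key step is to show that $\dim(S/Q)>m+n$. By Corollary~\ref{cor: hs of 2-minors}, $m+n$ is the dimension of $S/J_{m,K_{n+1}}$, which dominates the third term. For each path we have $\dim(S/J_{m,P_{n_i}})\geq m+n_i-1$; this is the term $T=\emptyset$ in the dimension formula, and it is consistent with Lemma~\ref{lemma: dimension and multiplicity of path graphs}. Summing over the paths gives
\[
\dim(S/Q)\geq n+r(m-1)\geq n+2m-2>n+m,
\]
where the last inequality uses $m\geq 3$ and $r\geq 2$. This comparison is where the hypotheses are used.

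It follows that the two complete-graph terms have pole order at most $m+n<\dim(S/Q)$ at $t=1$. They therefore change neither the dimension nor the leading numerator value $Q(1)$. Hence $\dim(S/J_{m,G})=\sum_i\dim(S/J_{m,P_{n_i}})$ and $e_0(S/J_{m,G})=\prod_i e_0(S/J_{m,P_{n_i}})$. By Lemma~\ref{lemma: dimension and multiplicity of path graphs}, the odd paths contribute $1$ and the even paths contribute $mn_i/2$, which gives the stated product. Alternatively, one can conclude directly from \cite[Lemma 3.1]{KKP-Hilbert-coefficients-regularity-BEI} via the height comparison. I do not expect a serious obstacle beyond checking the dimension inequality.
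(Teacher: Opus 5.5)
Your proof is correct and follows the paper's route. The paper gets the same decomposition $\hs(S/J_{m,G})=\hs(S_H/J_{m,H})+\hs(S_{n+1}/J_{m,K_{n+1}})-\hs(S_n/J_{m,K_n})$, with $H=\sqcup_i P_{n_i}$, by specializing Theorem~\ref{thm: hs of join of graphs}; you derive it directly from Theorem~\ref{thm: Short exact sequence}, which amounts to the proof of Lemma~\ref{lemma: HS of connected and a vertex} and works even though $H$ is disconnected. The only real difference is in checking $d>m+n$: the paper uses the exact path dimensions from Lemma~\ref{lemma: dimension and multiplicity of path graphs}, whereas your bound $d\geq n+r(m-1)$, taken from the $T=\emptyset$ prime, is shorter and cleaner.
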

\begin{proof}
It follows from Corollary \ref{cor: dim of join of graphs} that $\dim(S/J_{m,G})=\max\{\dim(S/J_{m,\sqcup_{i=1}^rP_{n_i}}),m+n\}$. Let $d=\dim(S/J_{m,\sqcup_{i=1}^rP_{n_i}})$. Then, we have $d=\sum_{i=1}^r\dim(S/J_{m,P_{n_i}})$.
Let $A=\{n_i:n_i\text{ is even }\}$, $B=\{n_i:n_i\text{ is odd }\}$. Using Lemma \ref{lemma: dimension and multiplicity of path graphs}\eqref{lemma: dimension of path graphs}, we obtain $$d=\sum_{n_i\in A}(\frac{mn_i}{2}+1)+\sum_{n_i\in B}\frac{m(n_i+1)}{2}.$$ Now, we show that $d-m-n>0$. Note that $d-m-n=\frac{1}{2}(m(n-2)-2n+2|A|+m|B|)$. Since $m\geq 3$ and $n\geq 2|A|+|B|$, we get $d-m-n\geq 4|A|+(m+1)|B|-6>0$. Thus, we obtain $\dim(S/J_{m,G})=\sum_{i=1}^r\dim(S/J_{m,P_{n_i}})$. 

Next, we compute the multiplicity. Using Theorem \ref{thm: hs of join of graphs}, we get
   \begin{align*}
      \hs(S/J_{m,G})&=\hs(S/J_{m,\sqcup_{i=1}^rP_{n_i}},t)+\frac{\sum_{i\geq 0}\binom{m-1}{i}\left[\binom{n}{i}-(1-t)\binom{n-1}{i}\right]t^i}{(1-t)^{m+n}}\\
      &=\frac{Q(t)}{(1-t)^{d}}+\frac{\sum_{i\geq 0}\binom{m-1}{i}\left[\binom{n}{i}-(1-t)\binom{n-1}{i}\right]t^i}{(1-t)^{m+n}},
   \end{align*}
    where $Q(t)\in \mathbb Z[t]$. Since $d>m+n$, it follows that  $$e_0(S/J_{m,G})=e_0(S/J_{m,\sqcup_{i=1}^rP_{n_i}})=\prod_{i=1}^r e_0(S/J_{m,P_{n_i}}).$$ By Lemma \ref{lemma: dimension and multiplicity of path graphs}\eqref{lemma: multiplicity of path graphs}, we obtain $e_0(S/J_{m,G})=\prod_{\substack{n_i=\text{ even}}} \dfrac{mn_i}{2}$. This completes the proof.  
\end{proof}
The following remark shows that the condition
$
\fcdeg_G(v)\geq \left\lceil\frac{t+2}{m-1}\right\rceil+1
$ in Theorem \ref{thm: fcdeg condition to remove vertex keeping Hilbert coefficients same} is sufficient but not necessary for removing a vertex.

\begin{remark}
    Let $G=(P_{n_1}\sqcup P_{n_2}\sqcup\cdots\sqcup P_{n_r})*\{v\}$, $r\geq 2,n_i\geq 3$ for $1\leq i\leq r$. Then, by Theorem \ref{thm: hs of multifan graph}, we have $e_0(S/J_{m,G})=\prod_{i=1}^r e_0(S/J_{m,P_{n_i}})=e_0(S/J_{m,G\setminus v})$. However, the vertex $v$ has $\fcdeg_G(v)=0.$ 
\end{remark}
 \subsection{Dimension and multiplicity of wheel graph}
\noindent
\vspace{2mm}

 Let $C_n$ denote the cycle on $n$ vertices for $n\geq 3$.  A \emph{wheel} graph on $n+1$ vertices, denoted $W_{n+1}$, is the join of $C_n$ and $\{v\}$, i.e., $W_{n+1}=C_n*\{v\}$. Note that for $n=3$, the cycle $C_3$ and $C_3*\{v\}$ are complete graphs. Dimension and multiplicity are known in this case by Corollary \ref{cor: hs of 2-minors}. Therefore, we assume $n\geq 4$. In order to compute the dimension and multiplicity of $W_{n+1}$, we first compute the same for $C_n$.
 \begin{theorem}\label{thm: dim and multiplicity of Cycles}
     Let $G=C_n$ be the cycle on $n\geq 4$ vertices. Then
     \begin{enumerate}
         \item\label{dim of cycle} $$\dim(S/J_{m,G})=\begin{cases}
             \dfrac{mn}{2}, \qquad\qquad\text{ if $n$ is even},\\
             \dfrac{m(n-1)}{2}+1, \text{ if $n$ is odd}.
         \end{cases}$$
         \item\label{multiplicity of cycle} $e_0(S/J_{m,G})=\begin{cases}
             12, \text{ if $(m,n)=(3,4)$},\\
             30, \text{ if $(m,n)=(3,5)$},\\
             2, ~ \text{ if $n$ is even and $(m,n)\neq (3,4)$},\\
             mn, \text{ if $n$ is odd and $(m,n)\neq (3,5)$}.
         \end{cases}$
     \end{enumerate}
 \end{theorem}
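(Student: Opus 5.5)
The plan is to read everything off the minimal prime decomposition $J_{m,C_n}=\bigcap_{T\in\mathscr C(C_n)}P_T(m,C_n)$ from \cite[Theorem 7]{GBI-Rauh-2013}. We use the height formula $\height(P_T(m,G))=(m-1)(n-c(T))+|T|$ together with Lemma~\ref{lemma: multiplicity of arbitrary ideal}, exactly as in the proof of Lemma~\ref{lemma: dimension and multiplicity of path graphs}.

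\textbf{Step 1: cut sets of $C_n$.} We first check that $T\in\mathscr C(C_n)$ if and only if either $T=\emptyset$, or $|T|\geq 2$ and no two vertices of $T$ are adjacent. A single vertex never disconnects a cycle. If two vertices of $T$ are adjacent, each one fails to be a cut vertex of $G\setminus(T\setminus v)$. Conversely, an independent set with $|T|\ge 2$ cuts the cycle into arcs, and each $v\in T$ joins two distinct arcs. For such a nonempty $T$ we have $c(T)=|T|$, while $c(\emptyset)=1$. Hence $\dim(S/P_\emptyset)=m+n-1$, and for nonempty $T$ the dimension is $(m-1)|T|+n-|T|=n+(m-2)|T|$. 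Since $m\ge 3$, this is maximized when $|T|$ is maximal, namely $|T|=\lfloor n/2\rfloor$.

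\textbf{Step 2: dimension.} For even $n$ the maximal value is $n+(m-2)n/2=mn/2$, and for odd $n$ it is $n+(m-2)(n-1)/2=m(n-1)/2+1$. We compare these with $m+n-1$, the dimension of $S/P_\emptyset=S/J_{m,K_n}$. The differences are $\tfrac12\big((m-2)(n-2)-2\big)$ for even $n$ and $\tfrac12\big((m-2)(n-3)-2\big)$ for odd $n$. Both are $\ge 0$ for $m\ge3$, $n\ge4$, which gives part \eqref{dim of cycle}. Equality holds exactly when $(m,n)=(3,4)$ in the even case and $(m,n)=(3,5)$ in the odd case.

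\textbf{Step 3: multiplicity.} By Lemma~\ref{lemma: multiplicity of arbitrary ideal}, $e_0$ is the sum of $e_0(S/P_T)$ over the top-dimensional minimal primes.
\begin{enumerate}
\item For even $n$, the maximal independent sets of size $n/2$ are exactly the two colour classes. Each leaves only isolated vertices, so $P_T$ is generated by variables and contributes $1$, giving a total of $2$.
\item For odd $n$, an independent set of size $(n-1)/2$ leaves $(n-3)/2$ isolated vertices and exactly one edge, and the choice of that edge determines $T$. This gives $n$ such sets. Each $P_T$ is generated by variables plus a copy of $J_{m,K_2}$, whose Hilbert series numerator is $1+(m-1)t$ by Corollary~\ref{cor: hs of 2-minors}, so each contributes $m$. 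The total is $mn$.
\item In the two equality cases, $P_\emptyset=J_{m,K_n}$ is also top-dimensional and adds $e_0=\binom{m+n-2}{m-1}$. This equals $\binom{5}{2}=10$ for $(3,4)$, giving $2+10=12$, and $\binom{6}{2}=15$ for $(3,5)$, giving $15+15=30$.
\end{enumerate}

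The main obstacle is Step 1, the precise characterization and counting of the maximum cut sets, and in particular the bijection between size-$(n-1)/2$ independent sets and edges in the odd case. The rest is bookkeeping with the height formula.
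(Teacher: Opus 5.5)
Your proposal is correct and takes essentially the same route as the paper. Both arguments use $c(T)=|T|$ for nonempty cut sets to compare $n+(m-2)\lfloor n/2\rfloor$ with $\dim(S/P_\emptyset)=m+n-1$, which isolates the exceptional cases $(3,4)$ and $(3,5)$, and then sum the multiplicities of the top-dimensional minimal primes via Lemma~\ref{lemma: multiplicity of arbitrary ideal}. Your explicit characterization of $\mathscr C(C_n)$ and the difference formulas $\tfrac12((m-2)(n-2)-2)$ and $\tfrac12((m-2)(n-3)-2)$ supply details that the paper only asserts.
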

 \begin{proof}
     By \cite[Theorem 7]{GBI-Rauh-2013}, the minimal primes of $J_{m,G}$ are of the form $P_T(m,G)$, where $T\in \mathscr{C}(G)$. The height of $P_T(m,G)$ is given by $\height(P_T(m,G))=(m-1)(n-c_T(G))+|T|.$ 
     \begin{enumerate}
         \item 
     Observe that $c_T(G)=|T|$ for all $\emptyset \neq T\in \mathscr{C}(G)$. Therefore, we have $\height(P_{T}(m,G))=(m-1)n-|T|(m-2)$ for $\emptyset \neq T\in \mathscr{C}(G)$. Note that the maximum cardinality of a cut set of \(G\) is \(\left\lfloor\frac{n}{2}\right\rfloor\). Hence, the minimal height among $P_T(m,G)$, $T\neq \emptyset$ is $(m-1)n-\left\lfloor\frac{n}{2}\right\rfloor(m-2)$. Since $n\geq 4$ and $\height(P_{\emptyset}(m,G))=(m-1)(n-1)$, we get $(m-1)n-\left\lfloor\frac{n}{2}\right\rfloor(m-2)\leq \height(P_{\emptyset}(m,G))$. Therefore, we obtain $$\dim(S/J_{m,G})=mn-(m-1)n+\left\lfloor\frac{n}{2}\right\rfloor(m-2)=n+\left\lfloor\frac{n}{2}\right\rfloor(m-2).$$ Simplifying the above expression, we get the desired result.
     \item From the proof of $(1)$, we have $(m-1)n-\left\lfloor\frac{n}{2}\right\rfloor(m-2)\leq \height(P_{\emptyset}(m,G)).$ Now, suppose $(m-1)n-\left\lfloor\frac{n}{2}\right\rfloor(m-2)= \height(P_{\emptyset}(m,G))=(m-1)(n-1)$. Then, we have $\lfloor\frac{n}{2}\rfloor=1+\frac{1}{m-2}$. This holds if and only if $m=3$ and $ n=4$ or $5$. If \((m,n)=(3,4)\), then the primes \(P_{\emptyset}(m,G)\), \(P_{T_1}(m,G)\), and \(P_{T_2}(m,G)\), where \(T_1,T_2\) are cut sets of cardinality \(2\), all have dimension equal to \(\dim(S/J_{m,G})\).
 By Lemma \ref{lemma: multiplicity of arbitrary ideal}, and the fact that $e_0(S/P_{T_i}(m,G))=1$ for $i=1,2$ and $e_0(S/P_{\emptyset}(m,G))=10$, we get $e_0(S/J_{m,G})=12$. Now, assume $(m,n)=(3,5)$. In this case, there are five cut sets of cardinality $2$ such that the corresponding prime has multiplicity $3$. Therefore, by Lemma \ref{lemma: multiplicity of arbitrary ideal} and Corollary \ref{cor: hs of 2-minors}, we obtain $e_0(S/J_{m,G})=30$. 

     Now, suppose $(m,n)\neq (3,4),(3,5)$. Then, we get $\dim(S/J_{m,G})=\dim(S/P_T(m,G))$ for all cut sets $T$ of with $|T|=\lfloor\frac{n}{2}\rfloor$. Therefore, by Lemma \ref{lemma: multiplicity of arbitrary ideal}, we have $$e_0(S/J_{m,G})=\sum_{|T|=\lfloor\frac{n}{2}\rfloor}e_0(S/P_T(m,G)).$$
     If $n$ is even, then there are two cut sets of cardinality $\lfloor\frac{n}{2}\rfloor$. The corresponding primes are generated by variables. Therefore, we get $e_0(S/J_{m,G})=2$. If $n$ is odd, then there are $n$ cut sets of cardinality $\lfloor\frac{n}{2}\rfloor$. Note that each of the corresponding primes is generated by variables and $J_{m,K_2}$. Therefore, by Corollary \ref{cor: hs of 2-minors}, we have $e_0(S/P_T(m,G))=m$ for all $T\in \mathscr{C}(G)$ with $|T|=\lfloor\frac{n}{2}\rfloor$. Hence, the result follows.  
     \end{enumerate} 
 \end{proof}
Now, we compute the dimension and multiplicity of wheel graphs.
\begin{theorem}
    Let $W_{n+1}=C_n*\{v\}$ be the wheel graph. Then 
    \begin{enumerate}
        \item $$\dim(S/J_{m,W_{n+1}})=\begin{cases}
            7,\quad \qquad\qquad\text{ if $(m,n)=(3,4)$},\\
            8, \quad\qquad\qquad\text{ if $(m,n)=(3,5)$},\\
            \dfrac{mn}{2},\quad \quad \qquad \text{ if $n$ is even and $(m,n)\neq (3,4)$},\\
            \dfrac{m(n-1)}{2}+1, \text{ if $n$ is odd and $(m,n)\neq (3,5)$}.
        \end{cases}$$
        \item Let $A=\{(3,4),(3,5),(3,6),(3,7),(4,4),(4,5)\}$. Then $$e_0(S/J_{m,W_{n+1}})=
        \begin{cases}
            15, &\text{ if $(m,n)=(3,4)$},\\
            21, &\text{ if $(m,n)=(3,5)$},\\
            30, &\text{ if $(m,n)=(3,6)$},\\
            57, &\text{ if $(m,n)=(3,7)$},\\
            37, &\text{ if $(m,n)=(4,4)$},\\
            76, &\text{ if $(m,n)=(4,5)$},\\
            2, &\text{ if $n$ is even and $(m,n)\notin A$},\\
            mn, &\text{ if $n$ is odd and $(m,n)\notin A$}.
        \end{cases}$$
    \end{enumerate}
\end{theorem}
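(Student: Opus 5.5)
The plan is to use Theorem~\ref{thm: hs of join of graphs} with $H=C_n$ and $H'=\{v\}$, so $p=n$ and $q=1$. Then
\[
\hs(S/J_{m,W_{n+1}},t)=\hs(S_{C_n}/J_{m,C_n},t)+\frac{1}{(1-t)^m}+\frac{\sum_{i\ge0}\binom{m-1}{i}\bigl[\binom{n}{i}-(1-t)\binom{n-1}{i}\bigr]t^i}{(1-t)^{m+n}}-\frac{\sum_{i\ge0}\binom{m-1}{i}\binom{n-1}{i}t^i}{(1-t)^{m+n-1}}\cdot 0 ,
\]
where I will expand the bracket carefully. The bracket term equals $\hs(S/J_{m,K_{n+1}})-\hs(S_n/J_{m,K_n})-\hs(S_1/J_{m,K_1})$. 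Equivalently, Lemma~\ref{lemma: HS of connected and a vertex} gives directly
\[
\hs(S/J_{m,W_{n+1}})=\hs(S_{C_n}/J_{m,C_n})+\hs(S_{n+1}/J_{m,K_{n+1}})-\hs(S_n/J_{m,K_n}).
\]
By Corollary~\ref{cor: dim of join of graphs}, $\dim(S/J_{m,W_{n+1}})=\max\{\dim S/J_{m,C_n},\,m+n\}$. Here the term $m$ coming from $K_1$ is dominated, and the $K_n$ term has dimension $m+n-1<m+n$.

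\textbf{Dimension.} I compare $\dim(S/J_{m,C_n})=n+\lfloor n/2\rfloor(m-2)$ from Theorem~\ref{thm: dim and multiplicity of Cycles} with $m+n$. The cycle wins strictly exactly when $(\lfloor n/2\rfloor-1)(m-2)>2$. The two agree when $(\lfloor n/2\rfloor-1)(m-2)=2$, which gives $(m,n)\in\{(3,6),(3,7),(4,4),(4,5)\}$. The $K_{n+1}$ term wins when $(\lfloor n/2\rfloor-1)(m-2)<2$, giving $(m,n)\in\{(3,4),(3,5)\}$. In those last two cases the dimension is $m+n$, namely $7$ and $8$. In all remaining cases the dimension is that of the cycle, which matches part (1).

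\textbf{Multiplicity.} I write each summand as $Q_X(t)/(1-t)^{d_X}$ and read off $e_0$ as the sum of $Q_X(1)$ over the summands whose $d_X$ equals the top dimension. The $K_n$ term never reaches the top dimension, since $m+n-1<\max$. I will use $e_0(S/J_{m,K_{l}})=\binom{m+l-2}{m-1}$ and $e_0(S/J_{m,C_n})$ from Theorem~\ref{thm: dim and multiplicity of Cycles}. The cases split as follows.
\begin{itemize}
\item If the cycle strictly dominates, then $e_0=e_0(C_n)$, which is $2$ or $mn$.
\item If $(m,n)\in\{(3,4),(3,5)\}$, only $K_{n+1}$ has top dimension, so $e_0=\binom{n+1}{2}$, giving $15$ and $21$.
\item In the tie cases the answer is $e_0(C_n)+\binom{m+n-1}{m-1}$. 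For $(3,6)$ this is $2+28=30$; for $(3,7)$ it is $21+36=57$; for $(4,4)$ it is $2+35=37$; for $(4,5)$ it is $20+56=76$.
\end{itemize}
Here I use that $e_0(C_n)$ equals $2$ or $mn$ outside the special pairs $(3,4),(3,5)$ of Theorem~\ref{thm: dim and multiplicity of Cycles}.

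The main obstacle is the bookkeeping in the tie cases. It has to be confirmed that the cycle's multiplicity used there is the generic value and not the exceptional $12$ or $30$. The tie pairs $(3,6),(3,7),(4,4),(4,5)$ are all outside $\{(3,4),(3,5)\}$, so the generic value applies. It also has to be checked that no cancellation occurs. Cancellation is impossible because every top-dimensional summand contributes a positive leading coefficient with a plus sign, while the only negative summand has strictly smaller dimension.
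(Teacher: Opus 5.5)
Your proposal is correct and follows essentially the paper's route. You decompose $\hs(S/J_{m,W_{n+1}})$ via the join formula (equivalently Lemma~\ref{lemma: HS of connected and a vertex}) into the $C_n$, $K_{n+1}$ and $K_n$ pieces, compare $\dim(S_{C_n}/J_{m,C_n})$ with $m+n$, and add the multiplicities of the top-dimensional pieces; your single criterion $(\lfloor n/2\rfloor-1)(m-2)\gtrless 2$, valid under the standing assumptions $m\ge 3$, $n\ge 4$, is a tidier version of the paper's parity-by-parity check. There are two cosmetic slips that do not affect the result: your first display keeps $\frac{1}{(1-t)^m}$ but drops the cancelling $-\binom{0}{i}(1-t)^n$ term (and its trailing ``$\cdot 0$'' term is spurious), and for $(3,4),(3,5)$ the multiplicity is $\binom{m+n-1}{m-1}=\binom{n+2}{2}$ rather than $\binom{n+1}{2}$, though your values $15$ and $21$ are correct.
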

\begin{proof} 
    \begin{enumerate}
        \item By Corollary \ref{cor: dim of join of graphs}, we have $\dim(S/J_{m,W_{n+1}})=\max\{\dim(S/J_{m,C_n}),m+n\}$. Now, it is easy to check that $\dim(S/J_{m,C_n})\geq m+n$ for all $m$ and $n$ with $(m,n)\neq (3,4) ,(3,5)$. Hence, the statement follows from Theorem \ref{thm: dim and multiplicity of Cycles}\eqref{dim of cycle}.
        \item  It follows from Theorem \ref{thm: hs of join of graphs} that \begin{equation}\label{Eq: Hilbert series of wheel graph}
        \hs(S/J_{m,W_{n+1}})=\frac{Q_1(t)}{(1-t)^d}+\frac{Q_2(t)}{(1-t)^{m+n}}-\frac{Q_3(t)}{(1-t)^{m+n-1}},
    \end{equation}
    where $Q_i(t)\in \mathbb Z[t]$ with $Q_i(1)\neq 0$ for $i=1,2,3$ and $d=\dim(S/J_{m,C_n})$. Therefore, we have $$e_0(S/J_{m,W_{n+1}})=\begin{cases}
            e_0(S/J_{m,C_n}),\qquad\qquad \qquad\qquad\text{ if $d>m+n$},\\
            e_0(S/J_{m,C_n})+e_0(S/J_{m,K_{n+1}}),~\text{ if $d=m+n$},\\
            e_0(S/J_{m,K_{n+1}}), \quad\qquad\qquad\qquad\text{ if $d<m+n$}.
        \end{cases}$$ 
        From the proof of part $(1)$, we have $d<m+n$ if $(m,n)=(3,4)$ and $(3,5)$. Now, we solve $d=m+n$. Using Theorem \ref{thm: dim and multiplicity of Cycles}, we have 
        $$\begin{cases}
         mn=2m+2n,& \text{ if $n$ is even, }\\
         m(n-1)+2=2m+2n&  \text{ if $n$ is odd}.
        \end{cases}$$
Solving these, we have $(m,n)=(4,4),(3,6),(3,7),(4,5)$. Now, using the fact that $e_0(S/J_{m,K_n})=\binom{m+n-2}{m-1}$ and Theorem \ref{thm: dim and multiplicity of Cycles}\eqref{multiplicity of cycle}, we get the desired result. 
    \end{enumerate}
\end{proof}
\section*{Acknowledgment}
 The author would like to thank Rajiv Kumar for his helpful suggestions. The author also acknowledges the University Grants Commission, Government of India, for the financial support.

\bibliographystyle{plain}
	\bibliography{bib}
\end{document}